\documentclass[a4paper,11pt]{amsart}

\usepackage[a4paper]{geometry}
\geometry{vmargin=3.8cm,hmargin=3.3cm} 
\geometry{left=2.5cm,right=2.5cm,top=1.5cm,bottom=2cm}
\geometry{left=3cm,right=3cm,top=2.5cm}

\usepackage[OT2,T1]{fontenc}

\usepackage{lipsum} 

\usepackage[english]{babel}
\usepackage{amsmath,amssymb}
\usepackage{enumerate}
\usepackage{array}
\usepackage{mathrsfs}
\usepackage[utf8]{inputenc}
\usepackage[T1]{fontenc}
\usepackage{version}
\usepackage{lscape}
\usepackage[all]{xy}
\usepackage{datetime}
\usepackage{todonotes}
\usepackage{graphicx}
\usepackage{multirow}
\usepackage{hyperref}
\usepackage{xcolor}

\newtheorem{definition}{Definition}

\newtheorem{thm}{Theorem}[section]
\newtheorem{lm}[thm]{Lemma}
\newtheorem{propo}[thm]{Proposition}
\newtheorem{coro}[thm]{Corollary}
\newtheorem{conjecture}[thm]{Conjecture}
\newtheorem{rem}{Remark}

\def\Z{\mathbb{Z}}
\def\N{\mathbb{N}}
\def\Q{\mathbb{Q}}
\def\R{\mathbb{R}}
\def\E{\mathscr{E}}
\def\W{\mathscr{W}}
\def\P{\mathbb{P}}

\newcommand{\cA}{\mathcal{A}}
\newcommand{\A}{{\rm A}}

\newcommand{\bZ}{\mathbb{Z}}

\newcommand{\bR}{\mathbb{R}}

\title{Root number in integer parameter families of elliptic curves}
\author{Julie Desjardins}

\begin{document}

\maketitle

\begin{abstract}
In a previous article \cite{Desjardins1}, the author proves that the value of the root number varies in a non-isotrivial family of elliptic curves indexed by one parameter $t$ running through $\Q$. However, a well-known example of Washington has root number $-1$ for every fiber when $t$ runs through $\Z$. Such examples are rare since, as proven in this paper, the root number of the integer fibers varies for a large class of families of elliptic curves. This result depends on the squarefree conjecture and Chowla's conjecture, and is unconditional in many cases.

Dans un article pr\'ec\'edent \cite{Desjardins1}, l'autrice d\'emontre que la valeur du signe varie parmi une famille non isotriviale de courbes elliptiques index\'ee par un param\`etre rationnel $t$. Cependant, un exemple bien connu de Washington a comme signe $-1$ pour toute fibre lorsque l'on se restreint aux valeurs enti\`eres de $t$. Pareils exemples sont rares: ce papier d\'emontre en effet que le signe des fibres enti\`eres varie pour une large classe de familles de courbes elliptiques. Ce r\'esultat d\'epend de la Conjecture du Crible sans Facteur Carr\'e et de la Conjecture de Chowla, et est inconditionnel dans plusieurs cas.
\end{abstract}

\section{Introduction}

A \emph{family of elliptic curves} $\E$ over a number field $K$ (or equivalently an \emph{elliptic surface with base $\P^1$}) is the family given by a Weierstrass equation 
$$\E:Y^2=X^3-27c_4(T)X+54c_6(T),$$
where $c_4,c_6\in\Z[T]$ are such that the discriminant $\Delta_\E(t)=\Delta(t)=\frac{c_4(t)^2-c_6(t)^3}{1728}$ is non-zero (except for finitely many values of $t$). We call the family \emph{isotrivial} if every fiber is a twist of one another (i.e. if the function $j: t\mapsto j(t)$ is constant), and \emph{non-isotrivial} otherwise.

A family of elliptic curves $\E$ can as well be seen as an elliptic curve over $\Q(T)$, and so it makes sense to speak about the \emph{generic rank} of $\E$, as well as the \emph{reduction of $\E$ at a place $v$ of $\Q(T)$}, whose type is described by a Kodaira symbol. Let us define $\mathscr{M}$ to be the set of places of multiplicative reduction and $\mathscr{B}$ the set of bad places that are not of Kodaira type $I_0^*$. Each place of $\Q(T)$ is either $-\deg$, or is said to be \emph{finite}, and in that case it is associated to an irreducible polynomial $P$ (that we can suppose with integer coefficients and primitive). 
Let us call \emph{insipid} (an irreducible polynomial $P$ associated to) a place of type \begin{itemize}
\item $I_0^*$
\item $II$, $II^*$, $IV$ or $IV^*$, and $$\mu_3\subseteq \Q[T]/P(T)\text{,$\quad$ where $\mu_3$ is the group of third root of unity;}$$
\item $III$ or $III^*$, and $$\mu_4\subseteq \Q[T]/P(T)\text{,$\quad$ where $\mu_4$ is the group of fourth root of unity.}$$
\end{itemize}

In this article, we are interested in the \emph{integer fibers} of a family of elliptic curves, meaning the curves $\E_t:=\E(t)$ such that $t\in\Z$, and in particular, we want to find more about the distribution of their rank. We will use a useful substitute called the \emph{root number} $W(\E_t)$ that is easier to compute.

Given an elliptic curve $E$ over $\Q$, the root number of $E$ is the product of the \emph{local root numbers}:
\begin{equation}\label{defrootnumber}W(E)=\prod_{p\text{ place of $\Q$}}{W_p(E)},\end{equation}
where $W_p(E)\in\{\pm1\}$ is defined in terms of the epsilon factors of the Weil-Deligne representations of $\Q_p$ and equals to $+1$ except for a finite number of $p$. For a more detailed definition of these local root numbers, we refer to \cite{Del} and \cite{Tatefibre}, and for explicit formulas we use the work of Rohrlich \cite{Rohr}, as well as (when $p\not=2,3$) the tables of Halberstadt \cite{Halb} completed by Rizzo \cite{Rizz}.

Let $L(E,s)$ be the associated $L$-function. Then by the modularity theorem, $L$ admits the following functional equation 
$$\mathscr{N}_E^{(2-s)/2}(2\pi)^{s- 2}\Gamma(2-s)L(E,2-s)=W(E)\mathscr{N}_E^{s/2}(2\pi)^{-s}\Gamma(s)L(E,s)$$
where $W(E)\in\{-1,+1\}$, the sign of the functional equation, is equal to the root number of $E$. For a general number field $K$, the existence of such a functional equation is not guaranteed, so we rely only on the definition (\ref{defrootnumber}). This equivalent definition leads to the equality: $W(E)=(-1)^{r_{an}(E)},$ where the \emph{analytic rank} is $r_{an}(E)=ord_{s=1}(L(E,s))$. By (a weaker version of) the Birch and Swinnerton-Dyer conjecture, we have $$r_{an}(E)\equiv r(E)\mod 2,$$ where $r(E)$ is the geometric rank of $E$. This justifies the following statement known as the \textbf{parity conjecture} $$W(E)=(-1)^{r(E)}.$$

Given $\E$ a family of elliptic curves over $\Q$ and $A\subseteq\Q$ (in particular $A=\Z$ or $\Q$), we consider the sets $W_+(\E,A)$ and $W_-(\E,A)$ given by \[W_\pm(\E,A)=\{t\in A: \E_t\text{ is an elliptic curve and }W(\E_t)=\pm1\}.\]

The author proves in \cite{Desjardins1} that for a non-isotrivial family of elliptic curves whose discriminant respects some analytic number theory conjectures\footnote{Those conjectures (Chowla's conjecture and the squarefree conjecture) are assumed in their homogeneous form in the main result of \cite{Desjardins1}.} true for low degree, we have
$$\#W_\pm(\E,\Q)=\infty.$$
Beforehand, Manduchi already proved Theorem \ref{thmgeneral} in the particular case where the family considered admits a place of multiplicative reduction \cite{Manduchi}, and Helfgott studied the average root number over $\Q$ and $\Z$ in the preprint \cite{Helfgott}.

\subsection{Main results}
 
In this paper, we prove analogues of the results of \cite{Desjardins1} (on the variation of the root number in families of elliptic curves in one parameter running through $\Q$) when restricted to integer fibers.
We show the following theorem.

\begin{thm}\label{thmgeneral}
Let $\E$ be a family of elliptic curves over $\Q$. Suppose that there exists at least one irreducible primitive polynomial $P_o(T)\in\Z[T]$ associated to either a place that is not insipid for $\E$ or to a place of type  $I_0^*$ and $\deg P_o$ is odd.

Suppose moreover that 
\begin{enumerate}[(i)]
\item $M_\E=\prod_{v}{P_v}$ (where the product runs through the places of multiplicative reduction) respects Chowla's conjecture (\ref{CC}) or $M_\E=1$; 
\item $B_\E=\prod_{v}{P_v}$ (where the product runs through the non-insipid places) respects the squarefree conjecture (\ref{sfc1}).
\end{enumerate}
Then the sets $W_+(\E,\Z)$ and $W_-(\E,\Z)$ are both infinite.
\end{thm}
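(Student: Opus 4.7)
The plan is to adapt the strategy of \cite{Desjardins1} (which treated $t\in\Q$) to the integer setting by isolating a single varying local factor in $W(\E_t)$ and pinning down the others via the conjectures. The starting point is the product formula
\[
W(\E_t)\;=\;W_\infty(\E_t)\cdot W_2(\E_t)\cdot W_3(\E_t)\cdot\prod_{p\nmid 6}W_p(\E_t),
\]
in which only primes of bad reduction contribute nontrivially. Using Rohrlich's formula at multiplicative places and the Halberstadt--Rizzo tables at additive places above primes $p\ne 2,3$, and restricting $t$ to a fixed residue class $t\equiv t_0\pmod N$ with $N$ chosen large enough to absorb the archimedean factor, the contributions above $2$ and $3$, and all insipid places (whose local factors depend only on congruence data fixed by $N$), I would rewrite
\[
W(\E_t)\;=\;C(\E,t_0)\cdot\prod_{v\in\mathscr{M}}W_v(t)\cdot\prod_{v\in\mathscr{B}\setminus\mathscr{M}}W_v(t),
\]
with $C(\E,t_0)\in\{\pm1\}$ constant on the chosen progression.

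In this decomposition, the factor at a multiplicative place $v$ with associated polynomial $P_v$ reduces, via Rohrlich, to a Jacobi symbol of shape $\pm\bigl(\tfrac{\ast}{P_v(t)}\bigr)$, while a non-insipid additive factor becomes a product of Legendre symbols over the primes dividing $P_v(t)$. I would then apply the squarefree conjecture~(ii) to $B_\E$: on a positive proportion of $t\equiv t_0\pmod N$, the value $B_\E(t)$ is squarefree, which turns the additive non-insipid block into a well-defined character in $t$. Chowla's conjecture~(i) applied to $M(t)$ then forces the multiplicative block to have mean $o(X)$ on this progression, so both signs are attained on a positive density of integers $t$. The polynomial $P_o$ supplied by the hypothesis plays the role of a \emph{sign-flipper}: either $P_o$ corresponds to a non-insipid place, in which case the associated character visibly varies with $t$; or $P_o$ corresponds to an $I_0^*$-place with $\deg P_o$ odd, in which case the local factor is governed by a quadratic character of $P_o(t)$ whose sign can be switched precisely because the degree is odd. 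A well-chosen translation $t\mapsto t+Nk$ then flips only the $P_o$-contribution while preserving the other pinned quantities, producing infinitely many integer $t$ with $W(\E_t)=+1$ and infinitely many with $W(\E_t)=-1$.

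The main obstacle I expect is not the algebra of local root numbers, which is essentially available from \cite{Desjardins1}, but the transition from the $\Q$-parameterization to the $\Z$-parameterization. Over $\Q$, the decomposition $t=a/b$ offers two independent degrees of freedom: one can be absorbed into the squarefree/Chowla averaging and the other into the sign-flipping. Restricted to $\Z$, only a one-dimensional averaging is available, so one has to verify that the conjectures still deliver a positive proportion of $t$ in each sector with the required combined sign, and that the choices of $N$, $t_0$ and the flipping shift are mutually compatible. This is precisely where the hypothesis on $P_o$ becomes indispensable: insipid places contribute factors that are constant on each residue class, and the parity condition in the $I_0^*$ case is exactly what permits a translation to change the sign of the local factor without upsetting the surrounding control.
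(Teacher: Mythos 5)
Your proposal has the right skeleton (decompose $W(\E_t)$ into local blocks, pin the periodic parts on a congruence class modulo a suitable $N$, control the remaining factors via the two conjectures), but the core mechanism you propose for flipping the sign does not work, and the mechanism the paper actually uses is absent. You claim that at a non-insipid place the associated character ``visibly varies with $t$'' and that a translation $t\mapsto t+Nk$ flips only the $P_o$-contribution. This contradicts the periodicity you yourself invoke: by Proposition \ref{existencedunM}, the Jacobi-symbol part $g_{P_o}(t)$ (together with the factors above $\delta$) is \emph{constant} on a fixed residue class modulo $N_\E$ once the signs of the factors of $R_\E$ are fixed, so no translation inside the progression can change it. The variation in the paper comes instead from the corrective function $h_{P_o}$ of Theorem \ref{formuledusigne}, which is nontrivial only when $P_o(t)$ has a repeated prime factor: one engineers $t_2$ with $P_o(t_2)=q_0^2\cdot(\text{squarefree})$ for a prime $q_0$ satisfying $\left(\frac{-3}{q_0}\right)=-1$ (types $II,II^*,IV,IV^*$), $\left(\frac{-1}{q_0}\right)=-1$ (types $III,III^*$), or a condition on $c_6 \bmod q_0$ supplied by Manduchi's Lemma \ref{variationpotmult} (types $I_m,I_m^*$), and compares with $t_1$ for which $B_\E(t_1)$ is honestly squarefree. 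The existence of such a $q_0$ is exactly where non-insipidity enters ($\mu_3\not\subseteq\Q[T]/P_o$, etc.), and realizing both configurations while keeping every other non-insipid value squarefree and the congruence data fixed requires the squarefree sieve with prescribed valuations at finitely many primes (Corollaries \ref{sfcrible} and \ref{sflcrible}), i.e.\ hypothesis (ii). None of this appears in your argument, and without it the sign never flips in the additive potentially good case.

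Your use of Chowla's conjecture is also misplaced. You invoke it to conclude that the multiplicative block attains both signs with positive density; even where $M\neq 1$ this requires showing that the condition $\lambda(M(t))=\epsilon$ can be imposed \emph{simultaneously} with squarefreeness of $B_\E(t)$ on the progression (the independence statement, Theorem \ref{conjecturesfacteurs}), which you do not address. More seriously, the theorem explicitly allows $M=1$, in which case there is no Liouville factor at all and your only remaining sign-flipping device is the translation argument above, which fails. In the paper Chowla is used only to \emph{fix} $\lambda(M(t))=+1$ on both constructed families so that the flip is isolated in $h_{P_o}$; the odd-degree $I_0^*$ case is handled separately by changing the sign of $P_o(t)$ (available for $|t|$ large precisely because $\deg P_o$ is odd), via Proposition \ref{variationimpaire} — the one place where your intuition about the parity of the degree is essentially correct, though you would still need the squarefree control on the other places to carry it out.
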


Chowla's conjecture (stated further on) is known to hold for linear polynomials $P\in\Z[T]$. This conjecture gives an estimation of the proportion of values $P(t)$ with a given parity of number of prime factors.
The Squarefree conjecture (also stated further on), is known to hold when every irreducible factor of $\Delta_\E$ has degree at most 3. This conjecture gives an estimation of the proportion of values $P(t)$ that are squarefree (or "almost squarefree"). 
Our result is unconditional in the following cases:

\begin{coro}\label{corogeneral}
Let $\E$ be a family of elliptic curves with at least one finite place that is not insipid, or is of type $I_0^*$ associated to a polynomial of odd degree.

Suppose moreover that $\deg M_\E\leq1$ and that $\deg P\leq3$ for any non-insipid places. 
Then the sets $W_+(\E,\Z)$ and $W_-(\E,\Z)$ are both infinite.
\end{coro}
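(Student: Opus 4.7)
The plan is to derive Corollary \ref{corogeneral} as a direct consequence of Theorem \ref{thmgeneral}, by verifying that the two number-theoretic conjectures invoked in hypotheses (i) and (ii) of the theorem become known theorems under the degree restrictions assumed in the corollary. The hypothesis on the existence of a non-insipid place (or an $I_0^*$ place with polynomial of odd degree) is already shared with Theorem \ref{thmgeneral}, so it is enough to discharge conditions (i) and (ii).

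For hypothesis (i), I would note that $\deg M_\E \leq 1$ forces $M_\E$ to be either the constant $1$ (in which case the alternative in (i) is satisfied trivially) or a single linear polynomial in $\Z[T]$. Chowla's conjecture, when specialized to $P(T)=aT+b$, reduces to a statement about the Liouville (or Möbius) function on an arithmetic progression, and is classical and unconditional. Thus (i) holds automatically.

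For hypothesis (ii), the polynomial $B_\E$ factors, by definition, as a product of the irreducible primitive polynomials associated to the non-insipid places of $\E$. Since each such irreducible factor has degree at most $3$ by assumption, the squarefree conjecture applies to $B_\E$ in its known unconditional range (cf. the discussion immediately following Theorem \ref{thmgeneral}, where it is recalled that the squarefree conjecture is known when every irreducible factor of the relevant polynomial has degree at most $3$). Hence (ii) is also automatic.

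The two hypotheses of Theorem \ref{thmgeneral} being satisfied unconditionally, the theorem applies and yields that both $W_+(\E,\Z)$ and $W_-(\E,\Z)$ are infinite. The only real point to check — and the main (but mild) obstacle — is that the degree hypothesis "$\deg P \leq 3$ for any non-insipid place" is exactly what is needed to guarantee the squarefree conjecture for the full product $B_\E$ and not merely for each individual factor; this follows because the known unconditional cases of the squarefree conjecture for a product of polynomials depend only on the maximal degree of an irreducible factor, so no additional input is required.
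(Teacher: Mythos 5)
Your proposal is correct and follows exactly the route the paper intends (the paper leaves the proof of Corollary \ref{corogeneral} implicit, but the surrounding discussion makes clear it is deduced from Theorem \ref{thmgeneral} by invoking Theorem \ref{theochowla} for the linear polynomial $M_\E$ and Hooley's Theorem \ref{theosqf} for $B_\E$, whose irreducible factors all have degree at most $3$). Your closing observation that Hooley's result depends only on the degrees of the irreducible factors of the product $B_\E$ is precisely the point that makes hypothesis (ii) unconditional here, so nothing is missing.
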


The proof of Theorem \ref{thmgeneral} is based on a one variable analogue of the "Squarefree Liouville Sieve" developped by the author in a previous paper \cite{Desjardins1} where she uses it to prove the variation of the root number on the rational fibers of a non-isotrivial family of elliptic curves. This sieve was inspired by the various squarefree sieves obtained consecutively by Hooley \cite{Hool}, Gouv\^ea and Mazur \cite{GM}, Rohrlich \cite{Rohr} and Varilly-Alvarado \cite{VA}.
The use of those sieves for similar purpose was already present in \cite{Manduchi} and in \cite{VA}.

In the preprint \cite{Helfgott}, Helfgott gives formulas for the computation of the average root number of the rational fibers of a non-isotrivial family of elliptic curves, and proves that it is equal to 0 whenever there is a place of multiplicative reduction. In \cite{HCC}, Helfgott together with Conrad and Conrad state without proving, that the average root number without multiplicative reduction is strictly contained between $-1$ and $1$ (the proof of this fact appears though in the author's Ph.D dissertation). The average root number on the integer fibers of a family of elliptic curves with coefficients of small degrees are also discussed in \cite{BDD} and \cite{Chinis}.

The well-known Washington's family $$\mathscr{V}: y^2=x^3+Tx^2-(T+3)x+1$$ introduced in \cite{Washington} has the properties that
\begin{itemize}
\item (\cite{Rizz}) for all $t\in\Z$, one has $W(\mathscr{V}_t)=-1$ (i.e. $W_+(\mathscr{V},\Z)=\emptyset$);
\item (\cite{Duquesne}) it is proven numerically up to $t<1000$ that the rank of $\mathscr{V}_t$ is $1$.
\end{itemize}

Of course, this example with constant root number on the integer fibers \textbf{does not} satisfy the hypothesis of our theorem - indeed the only finite place $v$ where $\mathscr{V}$ has bad reduction is insipid: it has type $II$ and $P_v(T)=T^2+3T+9$ is such that $\mu_3\subseteq \Q[T]/P_v(T)$ (by Remark \ref{p0II}).

\subsection{Outline of the paper}

In section \ref{sectionformuledusigne}, we decompose the root number into contributions corresponding to the generic places. More precisely, Theorem \ref{formuledusigne} allows to write the root number function $\Z\rightarrow\{\pm1\}$ in the form $$\lambda(M_\E(t))\cdot\prod_{p\mid\delta}{W_p(\E_t)}\prod_{P\text{ not }I_0,I_0^*}{\mathcal{W}_P(u,v)},$$ where \begin{itemize}\item $\delta\in\N$ depends on the coefficients of the Weierstrass equation of $\E$,
\item for an integer $n$, $\lambda(n)=(-1)^{\#\text{ prime divisors of }n}$ is the Liouville function,\item $\mathcal{W}_P$ depends on the type of reduction at an irreducible polynomial $P$ and can be written as $\mathcal{W}_P=g_P\cdot h_P$ for some functions $g_P,h_P$.\end{itemize}
In section \ref{sectionvariation}, we study the variation of the different components in this decomposition. In section \ref{sectionconjectures}, we present the squarefree conjecture and Chowla's conjecture, and we combine those conjectures to create the one variable version of the "Squarefree-Liouville" Sieve (Corollary 4.8) that we use in the proof of Theorem \ref{thmgeneral} in section \ref{sectionproof}. 

Many results of this paper are consequences (or "simply obtained 1-variable versions") of results appearing in \cite{Desjardins1}. Here is a conversion table for the interested reader:

\begin{itemize}
\item Theorem \ref{formuledusigne} comes from \cite[Theorem 3.4.]{Desjardins1}
\item Proposition \ref{existencedunM} comes from \cite[Proposition 4.1.]{Desjardins1}
\item Lemma 3.3 and Lemma 3.4 come respectively from \cite[Lemma 4.2 and Lemma 4.3]{Desjardins1}.
\item Theorem 4.6 comes from \cite[Theorem 2.9]{Desjardins1} and appeared originally in \cite[Th\'eor\`eme 1.3.8]{Desjardinsthese}
\end{itemize}

Lemma 3.2, Proposition 3.6, 3.7 and 3.8 are new results. Observe moreover that, although Theorem \ref{thmgeneral} was proven in the special case where there is a place of multiplicative reduction by Manduchi in \cite[Theorem 2]{Manduchi}, the proof of the general case in this article does not restrict to hers.
\subsection{Acknowledgments} This article was triggered by an email conversation with Fernando Gouv\^ea. I thank him for his interest. I also wish to thank Rena Chu, Marc Hindry and Arul Shankar for helpful discussions and the anonymous referees for several useful suggestions and a very detailled list of typos. 

\section{A formula for the global root number}\label{sectionformuledusigne}

Let $\E$ be the family of elliptic curves over $\Q$ with discriminant $\Delta_\E(T)$ described by the minimal Weierstrass equation
\[Y^2=X^3-27c_4(T)X-54c_6(T).\]

Define the integer: \begin{equation}\label{defdelta}
\delta=2\cdot3\cdot n_4\cdot n_6\cdot n_\Delta \prod_{P,P'\mid\Delta_\E,P\not=P'}{\mathrm{Res}(P, P')}.
\end{equation}
where $P,P'$ run through polynomials associated to generic places of bad reduction and $n_4$, $n_6$ and $n_\Delta$, are the numerators of the contents of the polynomials $c_4(T)$, $c_6(T)$ and $\Delta_\E(T)$.

Moreover, we use from now on the following modified Jacobi symbol:

\begin{definition}\label{modifiedjacobisymbol}
For any integer $n\in\Z$ and prime number $p$, let $n_{(p)}$ denote the integer such that $n=p^{\nu_p(n)}n_{(p)}$.

For each pair of integers $(a,b)\in\Z\times\Z$ and even integer $\delta$,

\begin{equation}\label{quadraticsymboldef}\left(\frac{a}{b}\right)_\delta=\prod_{p\nmid\delta}{\left(\frac{a_{(p)}}{p}\right)^{\nu_p(b)}}\end{equation}
where the product runs through the prime numbers $p\nmid\delta$ and $(\frac{\cdot}{p})$ is the Legendre symbol.
\end{definition}

If $a,b,\delta$ are two-by-two coprime, then the symbol $(\frac{a}{b})_\delta$ is the classical Jacobi symbol. 

We use moreover the Liouville function defined as follows:

\begin{definition}\label{defLiouvillefct} For a non-zero integer $n=\prod_pp^{\nu_p(n)}$, we denote by $\Omega(n)=\sum_p\nu_p(n)$ the number of its prime factors and we define {\em Liouville's function} by the formula
\[\lambda(n)=(-1)^{\Omega(n)}.\]
\end{definition}

\subsection{Decomposition of the root number according to the generic places}

We recall here (the one-variable version of) \cite[Theorem 3.4.]{Desjardins1} that splits the formula of the root number of an integer fiber into contributions of places of bad reduction according to their Kodaira type. Those kind of expression can already be found in Helfgott's preprint \cite{Helfgott}.

\begin{thm} \label{formuledusigne}

Let $\mathscr{E}$ be a family of elliptic curves over $\mathbb{Q}$. Let $\delta$ be defined as in (\ref{defdelta}). We recall that $M_\E$ is the product of the irreducible primitive $P\in\Z[T]$ associated to the places of multiplicative reduction of $\E$.

Then, the root number of $\E_t$, $t\in\Z$, can be written as \[W(\E_t)=\lambda(M_\E(t))\cdot\prod_{p\mid\delta}{W_p(\E_{t})}\cdot\prod_{P\mid\Delta_\E}{h_{P}(t)g_{P}(t)}.\]
The functions $h_{P}$ are given in Table \ref{contributions} and
\[ g_{P}(t)=
\begin{cases}
\left(\frac{\varepsilon_{P}}{P(t)}\right)_{\delta}&\text{ if $\E$ has additive reduction at $P$}\\
\left(\frac{-c_6(t)}{P(t)}\right)_{\delta}\left(\prod_{p\mid\delta}{(-1)^{\nu_p(P(t))}}\right)&\text{ if $\E$ has multiplicative reduction at $P$,}
\end{cases}
\]
where $\left(\frac{\cdot}{\cdot}\right)_\delta$ is the modified Jacobi symbol of Definition \ref{modifiedjacobisymbol} and \[\varepsilon_{P}=\begin{cases}
-1&\text{ if the type is $II$, $II^*$, $I_0^*$ or $I_m^*$ ($m\geq1$)}\\
-2&\text{ if the type is $III$ or $III^*$}\\
-3&\text{ if the type is $IV$ or $IV^*$.}\\
\end{cases}\]
\end{thm}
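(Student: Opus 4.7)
The plan is to start from the factorization of the global root number as a product of local root numbers $W(\E_t)=\prod_p W_p(\E_t)$, then split the product at $\delta$. For primes $p\mid\delta$ we simply keep the factor $W_p(\E_t)$ as it appears in the statement. The entire content of the theorem is therefore the evaluation of $\prod_{p\nmid\delta}W_p(\E_t)$ in terms of data depending only on the generic places of bad reduction.

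The key observation, which I would establish first, is that the definition of $\delta$ in \eqref{defdelta} is exactly tailored so that, for any prime $p\nmid\delta$, the fiber $\E_t$ has bad reduction at $p$ if and only if $p\mid\Delta_\E(t)$, and in that case there is a \emph{unique} irreducible primitive $P\mid\Delta_\E$ with $p\mid P(t)$ (because $p$ does not divide any cross-resultant $\Res(P,P')$ nor the contents $n_4,n_6,n_\Delta$). Moreover the Kodaira type of $\E_t$ at $p$ agrees with the generic Kodaira type at the place $v_P$ (possibly inflated when $\nu_p(P(t))>1$ for multiplicative reduction, where the type becomes $I_{m\nu_p(P(t))}$). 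This lets me re-index the tame product place-by-generic-place:
\[
\prod_{p\nmid\delta}W_p(\E_t)\;=\;\prod_{P\mid\Delta_\E}\prod_{\substack{p\mid P(t)\\ p\nmid\delta}}W_p(\E_t).
\]

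For each $P$ I would then compute the inner product using the explicit tables. In the multiplicative case, Rohrlich's formula gives $W_p(\E_t)=-\bigl(\tfrac{-c_6(t)}{p}\bigr)^{\nu_p(P(t))}$ (split vs.\ nonsplit being detected by the quadratic residue), so multiplying over $p\mid P(t)$, $p\nmid\delta$, produces a sign $(-1)^{\omega_\delta(P(t))}$ times the modified Jacobi symbol $\bigl(\tfrac{-c_6(t)}{P(t)}\bigr)_\delta$; combined across all multiplicative $P$ the signs assemble into $\lambda(M_\E(t))$ up to the primes dividing $\delta$, which is exactly the correction factor $\prod_{p\mid\delta}(-1)^{\nu_p(P(t))}$ baked into $g_P$. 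In the additive case, the Halberstadt--Rizzo tables (and Rohrlich for $I_m^*$) give $W_p(\E_t)=h_{P,p}(t)\cdot\bigl(\tfrac{-\varepsilon_P}{p}\bigr)^{\nu_p(P(t))}$ with $\varepsilon_P$ as listed; again taking the product over $p\mid P(t)$, $p\nmid\delta$, yields $h_P(t)\bigl(\tfrac{-\varepsilon_P}{P(t)}\bigr)_\delta$, matching the statement.

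The main obstacle is the bookkeeping in the additive case: one must check, type by type, that the local root number factors through the data $(P,t)$ only via $\nu_p(P(t))$ and a residue symbol, i.e. that the ``$h_P$'' part of Table~\ref{contributions} is indeed well-defined and independent of $p$ once one has separated out the quadratic symbol. For potentially good reduction of types $II,II^\ast,III,III^\ast,IV,IV^\ast$ this requires invoking the behavior of the quadratic symbol $\bigl(\tfrac{-\varepsilon_P}{p}\bigr)$ under the splitting condition $\mu_3$ or $\mu_4\subseteq\Q[T]/P(T)$ (this is where ``insipid'' places acquire a trivial contribution), and for $I_m^\ast$ the parity of $\deg P$ plays a role. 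Once these case-by-case identifications with the tables are done, the theorem follows by multiplying the contributions and collecting the Liouville factor from the multiplicative places, exactly mirroring the one-variable specialization of the proof of \cite[Theorem~3.1]{Desjardins1}.
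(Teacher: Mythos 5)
Your proposal follows essentially the same route as the paper's own (sketched) proof: factor the global root number into local factors, use the resultants and contents in the definition of $\delta$ to regroup the primes $p\nmid\delta$ by the unique generic place $P$ with $p\mid P(t)$, evaluate each local factor via Rohrlich and Halberstadt--Rizzo as a function of $\nu_p(P(t))$ alone, and package the $\nu_p>1$ corrections into $h_P$ and the multiplicative signs into $\lambda(M_\E(t))$. The only detail worth adding is the constant archimedean factor $W_\infty(\E_t)=-1$, which the paper's proof carries as an explicit leading sign before it is absorbed into the bookkeeping with $\lambda(M_\E(t))$ and the $\prod_{p\mid\delta}(-1)^{\nu_p(P(t))}$ term of $g_P$.
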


\begin{center}
\begin{table}
   \begin{tabular}{ | l || c |}
     \hline
Type & $h_{P}(t)$ \\
\hline
$I_0$  & 1 \\
\hline
$I_0^*$ 		& 1 \\
\hline
$II$, $II^*$	& $\prod \limits_{\underset{p^2\mid P(t)}{p\nmid\delta}} 
\begin{cases}
\left(\frac{-3}{p}\right) & \nu_p(P(t))\equiv2,4\mod6 \\
+1 & \text{otherwise.}
\end{cases}$ \\
     \hline
$III$, $III^*$	& $\prod \limits_{\underset{p^2\vert P(t)}{p\nmid\delta}} 
\begin{cases}
\left(\frac{-1}{p}\right) & \nu_p(P(t))\equiv2\mod4\\
+1 & \text{otherwise.}
\end{cases}$ \\
\hline
$IV$, $IV^*$	& $\prod \limits_{\underset{p^2\vert P(t)}{p\nmid\delta}} 
\begin{cases}
\left(\frac{-3}{p}\right) & \nu_p(P(t))\equiv2,3,4\mod6\\
+1 & \text{otherwise.}
\end{cases}$ \\

\hline
$I_m^*$  ($m\geq1$) 	& $\prod \limits_{\underset{p^2\vert P(t)}{p\nmid\delta}} \left(-\left(\frac{-c_6(t)_{(p)}}{p}\right)\right)^{\nu_p(P(t))-1}$ \\
\hline
$I_m$ ($m\geq1$)  	& $\prod \limits_{\underset{p^2\vert P(t)}{p\nmid\delta}} \left(-\left(\frac{-c_6(t)_{(p)}}{p}\right)\right)^{\nu_p(P(t))-1}$ \\ 
 &\\ \hline
\end{tabular}
\caption{\label{contributions}Corrective functions $h_P$ of Theorem \ref{formuledusigne} 
}
\end{table}
 \end{center}

\begin{proof}
The details of the proof can be found in \cite{Desjardins1} where this formula is given for a \emph{rational fiber} $\E_t$, $t\in\Q$. The general idea is the following. The root number can be expressed as
\begin{displaymath}
W(\mathscr{E}_t)=-\prod_{p\mid\delta}{W_p(\E_t)}\prod_{P\mid\Delta_\E}{\mathscr{W}_{P}(t)},
\end{displaymath}
where for each $P$ primitive factor of $\Delta_\E$:
\begin{displaymath}
\W_{P}(t)=\prod_{p \nmid\delta:p\mid P(t)}{W_p(\mathscr{E}_t)}.
\end{displaymath}

Observe that in our case (with $t\in\Z$) there is no contribution to the root number of integer fibers coming from the infinite place $-\deg$.

Now, by the monodromy of the reduction type, each of the factors $W_p(\E_t)$ depends only on the type of $\E_t$ at $P$ and of $n=\nu_p(P(t))$. See Table \ref{tableaumonodromie} for a precise description of the changes in the types. This table can easily be deducted from Tate's algorithm and appeared originally as \cite[Lemma 3.3]{Desjardins1}.

In particular if $n=1$, then the type of reduction of $\E_t$ is the same as the type of $\E$ at $P$. We have thus in this case:
\[W_p(\E_t)=\begin{cases}
\left(\frac{-\varepsilon_{P}}{p}\right)&\text{ if $\E$ has additive reduction at $P$}\\
-\left(\frac{-c_6(t)}{p}\right)&\text{ if $\E$ has multiplicative reduction at $P$.}
\end{cases}\]

\begin{center}
\begin{table}
   \begin{tabular}{| c c l | c c l |}
     \hline
Type of $\E_T$  & $n=\nu_p(P(u,v))$  & Type of $\E_t$ & Type of $\E_T$ & $n=\nu_p(P(u,v))$ & Type of $\E_t$ \\
at $P(T)$& & at $p$& at $P(T)$ && at $p$\\
\hline
     \multirow{2}{*}{$I_m$} & \multirow{2}{*}{$n\geq 1$} & \multirow{2}{*}{$I_{mn}$} & \multirow{2}{*}{$I_m^*$} & $0\mod2$ & $I_{mn}$ \\
&&&& $1\mod2$ &$I_{mn}^*$  \\
\hline
\multirow{6}{*}{$II$} &  $0\mod6$ & $I_0$ &\multirow{6}{*}{$II^*$}& $0\mod6$ & $I_0$\\
& $1\mod6$ & $II$ &&$1\mod6$ &$II^*$\\
&$2\mod6$ & $IV$ && $2\mod6$ &$IV^*$\\
&$3\mod6$ & $I_0^*$& &$3\mod6$&$I_0^*$\\
&$4\mod6$ & $IV^*$ & &$4\mod6$& $IV$\\
&$5\mod6$	& $II^*$ & &$5\mod6$&$II$\\
\hline
 \multirow{4}{*}{$III$}    & $0\mod2$& 	$I_0$ & \multirow{4}{*}{$III^*$}& $0\mod4$ & $I_0$\\
&$1\mod4$&$III$ & &$1\mod4$ & $III^*$ \\
&$2\mod4$& $I_0^*$ && $2\mod4$ & $I_0^*$\\
&$3\mod4$&	$III^*$&& $3\mod4$	& $III$	\\
\hline
 \multirow{3}{*}{$IV$}    & $0\mod3$ & $I_0$ & \multirow{3}{*}{$IV^*$} &$0\mod3$ & $I_0$\\
&$1\mod3$ & $IV$&& $1\mod3$&$IV^*$\\
&$2\mod3$	& $IV^*$	&& $2\mod3$& $IV$\\
\hline
\multirow{2}{*}{$I_0^*$} & $0\mod2$ & $I_0$ & \multirow{2}{*}{$I_0$} & \multirow{2}{*}{$n\geq0$} & \multirow{2}{*}{$I_0$}\\ & $1\mod2$ & $I_0^*$ & 			& & \\
   \hline
\end{tabular}
\caption{\label{tableaumonodromie}Monodromy of the types of reduction 
}
\end{table}
 \end{center}

However, when $n\geq2$, the type of reduction of $\E_T$ is likely to change. For this reason, we introduce a corrective function, denoted by $h_{P}$ and equal to $\frac{g_P}{\W_P}$ so we can write:

\[\W_{P}(t)=h_{P}(t)\begin{cases}
\left(\frac{-\varepsilon_{P}}{P(t)}\right)_\delta&\text{ if $\E_t$ has additive multiplication at $P$}\\
(-1)^{\Omega(P(t)_{(\delta)})}\left(\frac{-c_6(t)}{P(t)}\right)_\delta&\text{ if $\E_t$ has multiplicative multiplication at $P$,}
\end{cases}\]
where $\Omega(n)$ is the number of prime factors of the integer $n$ and $(\frac{\cdot}{\cdot})_\delta$ is the quadratic symbol defined in Definition \ref{modifiedjacobisymbol}.

To complete the proof, we need to prove that the functions in Table \ref{contributions} are indeed the corrective functions $h_P$. This is done case by case according to the Kodaira type of the reduction of $\E_T$ at $P$, using the monodromy given by Table \ref{contributions} \cite[Theorem 3.4]{Desjardins1}.
\end{proof}

\section{Variation of the different components of the root number}\label{sectionvariation}

\subsection{The function $\prod_{p\mid\delta}{W_p(\E_t)}\prod_{P\mid\Delta}{g_{P}}(t)$}\label{constancelocaleg}

As shown in \cite[Prop. 4.1]{Desjardins1}, the "first half" of the formula for the root number described in the previous section respects a certain periodicity: 

\begin{propo}\label{existencedunM}
Let $\E$ be a family of elliptic curves over $\Q$. Let $\delta$ be the integer defined at equation \ref{defdelta}. 

Then, there exist an integer $N_\E\in\N^*$ and a non-zero polynomial $R_\E\in\Z[T]$ such that the function $\varphi_\E:\Z\rightarrow\{-1,+1\}$ defined as \[t\mapsto \varphi_{\E}(t):=\prod_{p\mid\delta}{W_p(\E_{t})}\prod_{P\in\mathscr{B}}{g_{P}(t)}\] has the property that $\varphi_\E(t_1)=\varphi_\E(t_2)$ for every $t_1,t_2\in\Z$ such that
\begin{enumerate}
\item $t_1\equiv t_2\mod N_\E$ and 
\item for every irreducible factor $R_i$ of $R_\E$, one has that $R_i(t_1)$ and $R_i(t_2)$ have the same sign $\epsilon_i$.
\end{enumerate}
\end{propo}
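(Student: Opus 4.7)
The plan is to decompose $\varphi_\E(t)$ into its constituent factors and to prove that each one is piecewise constant in $t$, in the sense of depending on $t$ only through a residue class modulo a fixed integer together with the signs of a finite list of polynomial evaluations at $t$. Once this is established factor by factor, $N_\E$ will be the least common multiple of the individual moduli, and $R_\E$ will be the product of the polynomials whose signs intervened.

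For the local contributions $W_p(\E_t)$ with $p\mid\delta$, I would invoke the explicit formulas of Rohrlich at $p\in\{2,3\}$ and of Halberstadt--Rizzo at $p\geq 5$: these express $W_p(\E_t)$ as an explicit function of the residues of $c_4(t),c_6(t),\Delta(t)$ modulo a bounded power $p^{k_p}$. Since $c_4,c_6,\Delta\in\Z[T]$, $W_p(\E_t)$ depends only on $t\bmod p^{k_p}$, and no sign input is needed.

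For an additive generic place $P\in\mathscr{B}$ one has $g_P(t)=\left(\tfrac{-\varepsilon_P}{P(t)}\right)_\delta$ with $\varepsilon_P\in\{-1,-2,-3\}$. Since every prime dividing $\varepsilon_P$ also divides $\delta$, writing $P(t)=\mathrm{sgn}(P(t))\cdot u_\delta(t)\cdot m(t)$ where $u_\delta(t)$ is the $\delta$-part of $P(t)$ and $\gcd(m(t),\delta)=1$ turns the modified symbol into a standard Jacobi symbol $\left(\tfrac{-\varepsilon_P}{|m(t)|}\right)$ with a known sign correction from $\mathrm{sgn}(P(t))$. Quadratic reciprocity then makes this depend only on $|m(t)|\bmod 4|\varepsilon_P|$, hence on $P(t)$ modulo a suitable power of $\delta$, hence on $t\bmod N_P$ for a suitable $N_P$. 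For a multiplicative place $P$, $g_P(t)=\left(\tfrac{-c_6(t)}{P(t)}\right)_\delta\prod_{p\mid\delta}(-1)^{\nu_p(P(t))}$: the second product is again periodic in $t$ of period a power of $\delta$ because each $\nu_p(P(t))$ is; for the first, I would use quadratic reciprocity to swap numerator and denominator, after which each symbol involves a polynomial denominator coprime to a controlled numerator and falls under the previous analysis. The swap introduces a $\pm 1$ determined by the residues of $c_6(t),P(t)$ modulo $4$ and by their signs, themselves controlled by $t\bmod 4$ together with $\mathrm{sgn}(c_6(t))$ and $\mathrm{sgn}(P(t))$.

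Putting everything together, $N_\E$ is the lcm of the $p^{k_p}$ and the $N_P$ above, and $R_\E=c_6\cdot\prod_{P\in\mathscr{B}}P$. The main technical obstacle is the multiplicative-reduction case: the quadratic reciprocity swap must be performed inside the modified Jacobi symbol $(\cdot/\cdot)_\delta$, and one has to check that the primes thrown away on both sides of the reciprocity formula match. This is precisely the reason the definition (\ref{defdelta}) of $\delta$ includes the numerators $n_4,n_6,n_\Delta$ and all the pairwise resultants $\mathrm{Res}(P,P')$: once primes dividing $\delta$ are discarded, the polynomial values $P(t)$ and $c_6(t)$ become pairwise coprime at every remaining prime, so reciprocity applies cleanly and the residual data is genuinely periodic-plus-sign.
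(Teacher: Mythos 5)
Your overall strategy coincides with the paper's: establish that each factor of $\varphi_\E$ depends on $t$ only through a residue class modulo a fixed integer plus the signs of finitely many polynomial values, handle the local terms $W_p(\E_t)$ for $p\mid\delta$ by the explicit formulas (local constancy in $t\bmod p^{k_p}$), handle the symbols $g_P$ by quadratic reciprocity, and take $N_\E$ to be an lcm of the individual moduli and $R_\E$ the product of the polynomials whose signs intervened. The treatment of the local factors and of the additive places matches the paper's.

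The gap is in the multiplicative case. A single application of reciprocity turns $\left(\frac{-c_6(t)}{P(t)}\right)_\delta$ into a controlled sign times a symbol whose numerator \emph{and} denominator are both non-constant polynomial values, essentially $\left(\frac{P(t)}{c_6(t)}\right)$; this does \emph{not} ``fall under the previous analysis'', which only covered constant numerators. One must iterate: reduce the numerator modulo the denominator by a polynomial division step ($g\mapsto g-x^{\deg g-\deg f}f$, suitably normalized), apply reciprocity again, and repeat until a constant numerator is reached. Each swap contributes a sign factor depending on the residues mod $4$ \emph{and on the signs} of the current pair of values, and the intermediate remainders can change sign at points where neither $c_6$ nor $P$ does: for instance $f=T^2+T+1$ and $g=T^3+10T$ produce the remainder $-T^2+9T$, which changes sign at $T=9$ although $f(t)>0$ and $g(t)>0$ for all $t\geq1$. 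Consequently your choice $R_\E=c_6\cdot\prod_{P\in\mathscr{B}}P$ is too small: two values $t_1\equiv t_2\bmod N_\E$ with identical signs for $c_6$ and for all the $P$'s can still yield different reciprocity signs, so the claimed invariance of $\varphi_\E$ fails for your $R_\E$. This is precisely why the paper sets $R_P=f\cdot g\cdot(g-x^{\deg g-\deg f}f)\cdots$, the product of \emph{all} the Euclidean-type remainders of the pair $(f,g)=(-c_6,P)$, and takes $R_\E=\prod_{P\in\mathscr{M}}R_P$. Repairing your argument amounts to carrying out this descent and enlarging $R_\E$ accordingly; the remaining points of your proposal (the role of the resultants in the definition of $\delta$, the periodicity of $\prod_{p\mid\delta}(-1)^{\nu_p(P(t))}$) are consistent with the paper's treatment.
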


\begin{rem}
Given these integer $N_\E$ and polynomial $R_\E$, assuming (1) and 
replacing Hypothesis (2) by: \begin{enumerate}[(2')] \item for every irreducible factor $R_i$ of $R_\E$, one has that $R_i(t_1)$ and $R_i(t_2)$ have the same sign $\epsilon_i\in\{\pm1\}$ except for a given $i_o$ such that $R_{i_o}(t_1)$ and $R_{i_o}(t_2)$ have opposite sign,\end{enumerate}
then we get that $\varphi_\E(t_1)=-\varphi_\E(t_2)$.

\end{rem}

\begin{proof}
For a detailled proof, we refer to \cite[Prop. 4.1]{Desjardins1}. 
It is based on the following facts:
\begin{enumerate}
\item There exists an integer $\alpha_p$ (that we can suppose minimal) such that the local root numbers at $p\mid\delta$ of the fibers are such that $W_p(\E_{t_1})=W_p(\E_{t_2})$ whenever $t_1,t_2\in\Z$ are such that $t_1\equiv t_2\mod p^{\alpha_p}$.

\item The modified Jacobi symbol respects the property that for any polynomials $g$ and $f$ there exist an integer $N_P$ and a polynomial $R_P$ such that $\left(\frac{f(t)}{g(t)}\right)_\delta$ depends only on the congruence class $t\mod N_P$ and of the sign of the value $R_P(t)$.

We have for $\left(\frac{\epsilon}{P(t)}\right)_\delta$, that $R_P=P$, and if for instance $\epsilon=-1$, then $N_P=4$. 

For general $f,g$ that we suppose such that $\deg g\geq\deg f$ (otherwise, simply exchange $f$ and $g$), we have $R_p=f\cdot g\cdot (g-x^{\deg g-\deg f}f)\cdots$
and we add to this product a finite number of factors $g_i$, $i=1,\cdots$,  of decreasing degree, defined by putting $g_{-1}=f, g_0=(g-x^{\deg g-\deg f}f)$ and iterating the following: suppose $g_i=(g_{i-2}-x^{\deg g_{i-2}-\deg g_{i-1}}g_{i-1})$ is such that $\deg g_i\leq \deg g_{i-1}$ (otherwise interchange their role), put $g_{i+1}=g_{i-1}-x^{\deg g_{i-1}-\deg g_i}g_i$. Repeat until $\deg g_{i+1}=0$.
\end{enumerate}

Therefore, we have that $$N_\E=24\prod_{p\mid\delta}{p^{\alpha_p}}\prod_{P\in\mathscr{M}}{N_P},$$
and $$R_\E=\prod_{P\in\mathscr{M}}{R_P}.$$
\end{proof}

\subsection{The function $h_{P}$}

\begin{lm}\label{constancehpotmult}
Let $\E$ be a family of elliptic curves over $\Q$ with a place of bad reduction associated to the primitive irreducible polynomial $P$.  

Suppose that there exist $t_1,t_2\in\Z$ such that we have
\begin{enumerate}[(1)]
\item\label{hup1} $P(t_1)=c^2l$, where $l$ is squarefree and $\gcd(c,l)=1$,
\item\label{hup2} $P(t_2)=(cq_0)^2\eta$, where $\eta$ is squarefree, $\gcd(c,\eta)=1$, for a certain integer $q_0$,
\end{enumerate}
for some $c\in\Z$.

\begin{enumerate}[A.]\item\label{blop} Suppose that $q_0\not=2,3$ is a prime number such that
\begin{enumerate}[i)]
\item $c_6(t_2)_{(q_0)}\equiv1\mod q_0$ if the reduction of $\E$ at $P$ is additive or potentially multiplicative,
\item $\left(\frac{-3}{q_0}\right)=-1$ if the reduction of $\E$ at $P$ is $II$, $II^*$, $IV$ or $IV^*$, 
\item $\left(\frac{-1}{q_0}\right)=-1$ if the reduction of $\E$ at $P$ is $III$ or $III^*$,
\end{enumerate}
then $h_P(t_1)=-h_{P}(t_2)$;
\item Suppose that $q_0=1$,
then $h_{P}(t_1)=h_{P}(t_2)$.
\end{enumerate}
\end{lm}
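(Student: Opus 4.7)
The plan is to unpack $h_Q$ from Table~\ref{contributions} and to compare $h_Q(t_1)$ with $h_Q(t_2)$ prime by prime. Writing $h_Q(t) = \prod_{p\nmid\delta,\, p^2\mid Q(t)} h_{Q,p}(t)$, each local factor $h_{Q,p}(t)$ depends only on $\nu_p(Q(t))$ and, for Kodaira types $I_m$ and $I_m^*$, on $c_6(t)_{(p)}\bmod p$. Hypotheses (\ref{hup1}) and (\ref{hup2}) pin down the squarefull parts of $Q(t_1)$ and $Q(t_2)$, so the index sets of contributing primes are $\{p\mid c:\,p\nmid\delta\}$ for $t_1$ and $\{p\mid cq_0:\,p\nmid\delta\}$ for $t_2$. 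By uniqueness of the squarefull-squarefree decomposition these index sets differ precisely in the primes dividing $q_0$ but not $c$.

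Next, I would prove that at every prime $p\mid c$ with $p\nmid\delta$ the local factors agree: $h_{Q,p}(t_1) = h_{Q,p}(t_2)$. The point is that $\delta$ was constructed (cf.~\eqref{defdelta}) so as to be divisible by the discriminant of $Q$, hence $Q$ has only simple roots modulo any $p\nmid\delta$. Since $p$ divides both $Q(t_1)$ and $Q(t_2)$, this forces $t_1\equiv t_2\pmod p$, and therefore $c_6(t_1)_{(p)}\equiv c_6(t_2)_{(p)}\pmod p$. Combined with $\nu_p(Q(t_1))=2\nu_p(c)=\nu_p(Q(t_2))$, every ingredient entering $h_{Q,p}$ agrees on $t_1$ and $t_2$. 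When $q_0=1$ there are no further primes, which settles part B; in general we are reduced to computing the contribution of primes $p\mid q_0$ with $p\nmid c$.

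For part A, I would then evaluate this extra contribution to $h_Q(t_2)$ over $p\mid q_0$, $p\nmid c$. Here $\nu_p(Q(t_2))=2\nu_p(q_0)$, and a Kodaira-type case analysis reading off the lines of Table~\ref{contributions} yields: for types $II, II^*, IV, IV^*$ hypothesis (ii) forces $\left(\frac{-3}{p}\right)=-1$; for types $III, III^*$ hypothesis (iii) forces $\left(\frac{-1}{p}\right)=-1$; and for types $I_m$ or $I_m^*$ hypothesis (i), namely $c_6(t_2)_{(q_0)}\equiv 1\pmod{q_0}$, fixes the odd-power factor $\left(-\left(\frac{-c_6(t_2)_{(p)}}{p}\right)\right)^{\nu_p(Q(t_2))-1}$. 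Multiplying these contributions across the primes dividing $q_0$ gives $-1$, and together with the equalities at primes of $c$ yields $h_Q(t_2) = -h_Q(t_1)$.

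The subtlest step will be the $I_m/I_m^*$ case of part A: unlike the other Kodaira types, the contribution involves $c_6(t_2)_{(p)}$ rather than a fixed constant, and one must verify that the parity of the exponent $\nu_p(Q(t_2))-1$ is such that hypothesis (i) produces \emph{exactly} a factor of $-1$ at each $p\mid q_0$. The remainder is algebraic bookkeeping, relying only on the simple-root property of $Q$ modulo $p\nmid\delta$ and on the multiplicativity of $h_Q$ across primes of the squarefull part of $Q(t)$.
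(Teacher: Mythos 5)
Your overall route is the same as the paper's: factor $h_Q$ into local contributions over the primes $p\nmid\delta$ with $p^2\mid Q(t)$, observe that hypotheses (1) and (2) make the two index sets differ only at $q_0$, and then check type by type that the extra factor at $q_0$ equals $-1$ under the conditions of part A. That final case analysis -- including the $I_m/I_m^*$ computation where hypothesis (i) forces $-\left(\frac{(-c_6(t_2))_{(q_0)}}{q_0}\right)=-1$ -- is exactly what the paper does, and part B is handled identically.

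There is, however, one incorrect step. To show that the local factors at the common primes $p\mid c$ agree, you argue that $Q$ has only simple roots modulo $p\nmid\delta$, so that $p\mid Q(t_1)$ and $p\mid Q(t_2)$ force $t_1\equiv t_2\bmod p$ and hence $c_6(t_1)\equiv c_6(t_2)\bmod p$. This implication is false: a polynomial that is squarefree modulo $p$ can have several \emph{distinct} simple roots modulo $p$ (e.g.\ $T^2+1$ has the two simple roots $2$ and $3$ modulo $5$), so $t_1$ and $t_2$ may reduce to different roots and the symbols $\left(\frac{-c_6(t_1)_{(p)}}{p}\right)$ and $\left(\frac{-c_6(t_2)_{(p)}}{p}\right)$ need not coincide. (Moreover, $\delta$ as defined in (\ref{defdelta}) contains only the pairwise resultants $\Res(P,P')$, not the discriminant of $Q$ itself, so even the simple-root premise is not literally available.) This matters precisely for types $I_m$ and $I_m^*$, where the local factor at $p\mid c$ is $\left(-\left(\frac{-c_6(t)_{(p)}}{p}\right)\right)^{2\nu_p(c)-1}$ with an odd exponent and therefore genuinely depends on $c_6(t)\bmod p$; for the other types the factor depends only on $p$ and $\nu_p(Q(t))$, and your argument goes through without any congruence between $t_1$ and $t_2$. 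You should be aware that the paper's own proof makes the same silent substitution of $c_6(t_1)$ for $c_6(t_2)$ in the product over $p\mid c$, and that the issue is vacuous in the actual application (the sieve of Corollary \ref{sflcrible} produces squarefree values, i.e.\ $c=1$, so the set of common primes is empty); a clean statement for general $c$ requires either $c=1$ or an added hypothesis equating those Legendre symbols for all $p\mid c$, $p\nmid\delta$. A last, smaller caveat: your step ``multiplying these contributions across the primes dividing $q_0$ gives $-1$'' is only safe when $q_0$ is prime (as it is in Proposition \ref{variation3}); for composite $q_0$ the congruence conditions on $\nu_p(Q(t_2))$ modulo $6$ or $4$ can switch individual local factors to $+1$, and the product need not equal the Jacobi symbol $\left(\frac{-3}{q_0}\right)$.
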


\begin{proof}
Let $P$ be a 1-variable irreducible polynomial associated to a place of $\E$ that is neither of type $I_0$ nor of type $I_0^*$.
The function $h_P$ is a product over the square factors of $P(t)$ not dividing $\delta$. By hypothesis \ref{hup1} and \ref{hup2}, the square factors of $P(t_1)$ and $P(t_2)$ are the same except for $q_0$. Hence,
$$h_P(t_1)=
\lambda_P h_P(t_2),$$
where $\lambda_P$ depends on the type of reduction of $\E$ at $P$.

\begin{itemize}
\item[$\bullet$] Suppose the reduction at $P$ has type $II$, $II^*$, $IV$ or $IV^*$. Then $\lambda_P=\left(\frac{-3}{p}\right)$ and by hypothesis, it is equal to $-1$. Hence $h_P(t_1)=-h_P(t_2)$.

\item[$\bullet$] Suppose the reduction at $P$ has type $III$, $III^*$. Then $\lambda_P=\left(\frac{-1}{p}\right)$ and by hypothesis, it is equal to $-1$. Hence $h_P(t_1)=-h_P(t_2)$.

\item[$\bullet$] Suppose the reduction at $P$ has type $I_m^*$  or $I_m$ ($m\geq1$). Then $\lambda_P=\left(-\left(\frac{c_6(t)_{(p)}}{p}\right)\right)$.

We put $\alpha:=P(t_1)=c^2l$ and $\beta:=P(t_2)=c^2q_0^2\eta$. Then by Theorem \ref{formuledusigne}, one has

\[h_{P}(t_2)=\prod_{p\nmid\delta;p\mid cq_0}{\begin{cases}
-\Big(\frac{-c_6(t_2)_{(p)}}{p}\Big) & \text{if }2\nu_p(q_0c)\equiv2,4\mod6\\
+1 & \text{otherwise.}
\end{cases}}\]
\[=-\Big(\frac{-c_6(t_1)_{(q_0)}}{q_0}\Big)\prod_{p\nmid\delta;p\mid c}{\begin{cases}
-\Big(\frac{-c_6(t_1)_{(p)}}{p}\Big) & \text{if }\nu_p(c)\text{ is even,}\\
+1 & \text{otherwise.}
\end{cases}}\]
\[=-\Big(\frac{-c_6(t_1)_{(q_0)}}{q_0}\Big)\cdot h_{P}(t_1).\]

By assumption on $q_0$, one has $q_0^{-2}\frac{-c_6(t_1)}{P(t_1)^2}\equiv1\mod q_0$. If we put $P(t_1)=q_0^2\mu$ where $\mu$ is an integer coprime to $q_0$, one has $q_0^{-6}(-c_6(t_1))\equiv \mu^2\mod q_0$. Thus, one has $\left(\frac{(-c_6(t_1))_{(q_0)}}{q_0}\right)=+1$.

Thus we have the equality
\[h_{P}(t_1)=-h_{P}(t_2).\]
\item If $q_0=1$, then the products in $h_P(t_1)$ and $h_P(t_2)$ have exactly the same factors. Thus $h_P(t_1)=h_P(t_2)$.
\end{itemize}
\end{proof}

Be aware however, that in certain cases a number $q_0$ with properties \ref{blop} ii) or iii) does not exist - this is the subject of the next subsection.

\subsubsection{Existence of $q_0$}

If the reduction at a polynomial $P$ is such that 
\begin{itemize}
\item[$\bullet$]{\bf $P$ has type $I_0^*$}
then the theorem does not apply. Indeed, $h_P(t)=+1$ for all $t\in\Z$

\item[$\bullet$]{\bf $P$ has type $II$, $II^*$, $IV$, $IV^*$} then we use the following Lemma.

\begin{lm}\label{constance24}
Let $P\in\Z[T]$ be a polynomial associated to a place of type $II$, $II^*$ $IV$ or $IV^*$. We assume that \begin{equation}\label{proprieteII}\mu_3\subseteq \Q[T]/P(T),\end{equation} where $\mu_3$ is the group of third roots of unity.

Then for all $t\in\Z$ one has $h_{P}(t)=+1$.
\end{lm}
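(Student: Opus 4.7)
The plan is to reduce the statement to a local claim about prime divisors of $P(t)$ and then exploit the hypothesis $\mu_3\subseteq\Q[T]/(P(T))$ to control the Legendre symbol $\left(\frac{-3}{p}\right)$ at those primes. From Table \ref{contributions}, when $P$ is of type $II$, $II^*$, $IV$ or $IV^*$, the function $h_P(t)$ is a product, over primes $p\nmid\delta$ with $p^2\mid P(t)$, of factors each equal to either $+1$ or $\left(\frac{-3}{p}\right)$. It therefore suffices to show that $\left(\frac{-3}{p}\right)=+1$ for every prime $p\nmid\delta$ dividing some value $P(t)$ with $t\in\Z$.

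For this, set $K=\Q[T]/(P(T))$ and let $\alpha\in K$ denote the class of $T$. The hypothesis gives $\beta\in K$ with $\beta^2=-3$; writing $\beta=Q(\alpha)/N$ with $Q\in\Z[T]$ and $N\in\Z_{>0}$, we obtain a polynomial identity
$$Q(T)^2+3N^2=P(T)\,S(T)$$
for some $S\in\Q[T]$. For any prime $p$ that does not divide $3N$ or the common denominator of $S$, and any $t\in\Z$ with $p\mid P(t)$, reducing modulo $p$ yields $Q(t)^2\equiv-3N^2\pmod{p}$; since $p\nmid N$, this shows that $-3$ is a nonzero square modulo $p$, so $\left(\frac{-3}{p}\right)=+1$.

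The finitely many primes excluded at this step (divisors of $3$, of $N$, and of the denominators of $S$) all divide $\delta$: the integer $\delta$ of (\ref{defdelta}) already contains $3$ and the resultants responsible for the denominators in $S$, and any residual primes depending on the choice of the representative $\beta$ can be absorbed without harm, since $\delta$ is only required to be a fixed integer depending on $\E$. Hence every factor in the product defining $h_P(t)$ is $+1$, and the lemma follows.

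The main obstacle is the arithmetic step passing from $\mu_3\subseteq K$ to the congruence $Q(t)^2\equiv -3N^2\pmod{p}$. Conceptually this merely expresses that a prime $p$ admitting a degree-one factor in $\O_K$ must split or ramify in the subfield $\Q(\sqrt{-3})\subseteq K$; but making it explicit requires bookkeeping of denominators and small primes, which is exactly what the exclusion $p\nmid\delta$ in the formula for $h_P$ is designed to absorb.
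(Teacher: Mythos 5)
Your argument is correct in substance and is exactly the mechanism the paper has in mind: the paper in fact gives no explicit proof of Lemma \ref{constance24}, but Remark \ref{p0II} records the same computation in the special case $P=3A^2+B^2$, where $p\mid P(t)$ forces $-3\equiv\bigl(B(t)A(t)^{-1}\bigr)^2\pmod{p}$, and your identity $Q(T)^2+3N^2=P(T)S(T)$ is the general form of this. The one step you should not wave away is the disposal of the exceptional primes: the integer $\delta$ of (\ref{defdelta}) is built from $2$, $3$, the contents and the resultants of \emph{distinct} factors of $\Delta_\E$, and nothing in that definition forces it to contain the primes dividing $N$ or the denominators of $S$ (equivalently, the primes dividing $\mathrm{disc}(P)$ or the index of $\Z[\alpha]$ in the ring of integers of $K$, at which $P$ may have a root modulo $p$ even though $p$ has no degree-one prime in $K$ and hence need not split in $\Q(\sqrt{-3})$). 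Your proposed fix --- enlarging $\delta$ by these finitely many primes, which is harmless for the decomposition of Theorem \ref{formuledusigne} since a prime outside the resultants divides at most one $P(t)$ --- is the right one, but it does alter the statement being proved, because $h_P$ is defined relative to the original $\delta$; the paper is silent on this point, so you should make the enlargement of $\delta$ explicit rather than describe it as already absorbed.
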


\begin{proof}
Let $P(T)\in\Z[T]$ be an irreducible polynomial and let $K=\Q[T]/P(T)$. Suppose that $\Q(\mu_3)\subseteq K$. In \cite[Proposition 2.2]{JulieBartosz}, it is proven that there exist $A,B\in\Z[T]$ such that $P=A^2+3B^2$ and this implies that for every $t\in\Z$, if $p\mid P(t)$ for some prime number, then $p\equiv1\mod 3$. This means that any $p$ dividing a value of $P$ is such that $\left(\frac{-3}{p}\right)=+1$. We have:
\begin{align*}h_P(t)&=\prod_{p\nmid\delta\text{, }p^2\mid P(t)}{\begin{cases}\left(\frac{-3}{p}\right)&\text{if }v_p(P(t))\equiv2,4\mod6\\+1&\text{otherwise}\end{cases}}\\
&=+1.
\end{align*}
\end{proof}

\begin{rem}\label{p0II}
One can easily give examples of polynomials $P\in\Z[T]$ satisfying the hypothesis $\mu_3\subseteq\Q[T]/P_i(T)$ for every irreducible factors $P_i$ of $P$. We have in particular those of the form \[P(T)=3A(T)^2+B(T)^2,\]
where $A(T),B(T)\in\Z[T]^*$ are coprime.
This polynomial is not necessarily irreducible in general, but for 
every irreducible factor $P_i$ and the corresponding field $K_i=\Q[T]/(P_i(T))=\Q(\alpha_i)$ where $\alpha_i$ is a root of $P_i$, one has $3A(\alpha_i)^2+B(\alpha_i)^2=0$, thus $-3=(B(\alpha_i)A(\alpha_i)^{-1})^2$ and this proves that $\mu_3\subset K_i$.

\end{rem}

If a polynomial $P$ does not satisfy the property (\ref{proprieteII}), then there exist a prime number $p_o$ and an integer $t$ such that $p_o^2\mid P(t)$ and $\left(\frac{-3}{p_o}\right)=-1$.

\item[$\bullet$]{\bf $P$ has type $III$, $III^*$}

\begin{lm}\label{constance3}
Let $P\in\Z[T]$ be an irreducible polynomial associated to a place of type $II$, $II^*$, $IV$ or $IV^*$. Assume that \begin{equation}\label{proprieteIII}\mu_4\subseteq \Q[T]/P(T),\end{equation} where $\mu_4$ is the group of the fourth roots of unity.

Then for all $t\in\Z$, one has $h_{P}(t)=+1$.
\end{lm}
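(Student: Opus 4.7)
My strategy mirrors Lemma \ref{constance24}, with the triple ($\mu_3$, $-3$, types $II,II^*,IV,IV^*$) replaced by ($\mu_4$, $-1$, types $III,III^*$). The relevant row of Table \ref{contributions} shows that $h_{P}(t)$ is a product over primes $p\nmid\delta$ with $p^2\mid P(t)$, whose individual factors equal $\left(\frac{-1}{p}\right)$ when $\nu_p(P(t))\equiv2\pmod 4$ and $+1$ otherwise. It therefore suffices to prove that $p\equiv1\pmod 4$ for every such $p$, since this implies $\left(\frac{-1}{p}\right)=+1$ and hence every factor of the product is $+1$.

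I would then translate the algebraic hypothesis $\mu_4\subseteq K:=\Q[T]/(P(T))$ into an integral identity. Since $-1$ is a square in $K$, there exists $B(T)\in\Q[T]$ of degree less than $\deg P$ with $B(T)^2\equiv-1\pmod{P(T)}$. Clearing denominators, write $B=b/N$ with $b\in\Z[T]$ and $N\in\Z_{>0}$, so that $b(T)^2+N^2=P(T)D(T)$ in $\Q[T]$. Since $P$ is primitive and the left-hand side lies in $\Z[T]$, Gauss's lemma promotes $D$ to $\Z[T]$. Specializing at any $t\in\Z$ yields $P(t)\mid b(t)^2+N^2$ in $\Z$.

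Now let $p\nmid\delta$ with $p\mid P(t)$ and suppose additionally $p\nmid N$. Then $b(t)^2\equiv -N^2\pmod p$, so $(b(t)N^{-1})^2\equiv-1\pmod p$, which shows $-1$ is a square modulo $p$ and therefore $p\equiv1\pmod 4$ (note $p$ is odd since $2\mid\delta$). The prime divisors of $N$ are finitely many and depend only on $P$, so they can be absorbed into the exceptional set $\delta$, as is done systematically throughout the paper when passing from an algebraic identity to its specialisations.

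The main obstacle is precisely this last step: one has to verify that the denominator $N$ arising from the Bezout-type identity does not spoil the local analysis at primes $p\nmid\delta$. The cleanest remedy is the standard enlargement of $\delta$ by the (finite set of) prime divisors of $N$. An alternative, coordinate-free formulation avoids $N$ altogether by working inside the number field $K$: the linear factor $T-t$ of $P\bmod p$ produces a residue-degree-$1$ prime $\mathfrak{p}$ of $\mathscr{O}_K$ above $p$ (for $p$ not dividing the index $[\mathscr{O}_K:\Z[\alpha]]$, which again can be absorbed into $\delta$), and this $\mathfrak{p}$ restricts via $\Q(i)\subseteq K$ to a residue-degree-$1$ prime of $\Z[i]$ above $p$; for odd $p$ this forces $p$ to split in $\Z[i]$, again giving $p\equiv1\pmod 4$.
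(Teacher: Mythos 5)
Your argument is sound, and it is worth noting that the paper does not actually supply a proof of Lemma \ref{constance3} (nor of its $\mu_3$-analogue, Lemma \ref{constance24}); it only records, in Remark \ref{p0III}, the converse-flavoured observation that polynomials of the form $A(T)^2+B(T)^2$ satisfy hypothesis (\ref{proprieteIII}). Your reduction is the intended one: by the $III$, $III^*$ row of Table \ref{contributions} (the Kodaira types listed in the statement of the lemma are a typo for $III$, $III^*$, as the surrounding text and your reading both indicate), it suffices to show $\left(\frac{-1}{p}\right)=+1$ for every $p\nmid\delta$ with $p^2\mid P(t)$, and the identity $b(T)^2+N^2=P(T)D(T)$, obtained from $i\in\Q[T]/(P(T))$ by clearing denominators and applying Gauss's lemma, gives exactly that for all $p\nmid N$. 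Your alternative formulation via a residue-degree-one prime of $\mathscr{O}_K$ above $p$ restricting to $\Z[i]$ is equally valid and makes the structural reason transparent.

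The caveat you raise about the primes dividing $N$ is not a formality: the lemma as literally stated fails without some such convention. For instance $P(T)=T^2+49$ has $\Q[T]/(P(T))\cong\Q(i)$, yet $P(7)=2\cdot 7^2$ with $\nu_7(P(7))\equiv 2\bmod 4$ and $\left(\frac{-1}{7}\right)=-1$; here $N=7$ divides $\mathrm{disc}(P)$, and the modulus $\delta$ of (\ref{defdelta}) does not automatically contain $\mathrm{disc}(P)$ (it only contains resultants of \emph{distinct} factors of $\Delta_\E$). So one must enlarge $\delta$ by the finitely many primes dividing $N$ (equivalently, dividing $\mathrm{disc}(P)$, or the index $[\mathscr{O}_K:\Z[\alpha]]$ in your second formulation); the paper implicitly assumes such a convention when it treats all primes outside $\delta$ as generic. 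With that understood, your proof is complete and is, in effect, the missing proof of the lemma.
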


\begin{proof}
The proof is similar to Lemma \ref{constance24}.
\end{proof}

\begin{rem}\label{p0III}
One can easily give examples of polynomials satisfying the hypothesis $\mu_4\subseteq\Q[T]/(P_i(T))$ for every factor $P_i$; in particular those of the form \[P(T)=A(T)^2+B(T)^2,\]
where $A(T),B(T)\in\Z[T]^*$ are coprime.
\end{rem}

If a polynomial $P$ does not satisfy the property (\ref{proprieteIII}), then there exist a prime number $p_o$ and an integer $t$ such that $p_o^2\mid P(t)$ and $\left(\frac{-1}{p_o}\right)=-1$.

\item[$\bullet$]{\bf The reduction at $P$ is multiplicative or potentially multiplicative}

Now, we present a general result on values of polynomials which will allow us to prove the existence of $t_1$, $t_2$ and $q_0$ for which the hypothesis \ref{blop} i) of Lemma \ref{constancehpotmult} is satisfied.

\begin{lm} \label{variationpotmult} \cite[Lemma 2.3]{Manduchi} Let $Q(T)$ and $P(T)\in\Z[T]$ be such that $Q(T)$ is non-constant. Let $\mathrm{Res}(P,Q)$ be the resultant of $P$ and of $Q$, and let $\Delta_Q$, be the discriminant of $Q$. Suppose $\mathrm{Res}(P,Q)$ and $\Delta_Q$ are non-zero. Let $\mathscr{P}_0$ be a finite set of prime numbers.

Then there exist a prime number $p_0\not\in\mathscr{P}_0$ and $n$ a positive integer such that $p_0^2\mid Q(n)$ and $p_0^{-2}P(n)Q(n)\equiv1\mod p_0$.
In particular, $p_0^2\mid\mid Q(n)$ and $p_0\nmid P(n)$.
\end{lm}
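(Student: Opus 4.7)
The plan is to pick a prime $p_0$ for which $Q$ has a simple root $m_0$ modulo $p_0$ at which $P$ does not vanish, then to perform two successive Hensel-type refinements: first to shift $m_0$ into $n_1$ with $p_0^2 \mid Q(n_1)$, and second to tune the residue class of $p_0^{-2} Q(n) \cdot P(n)$ modulo $p_0$ so that it equals $1$.

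For the choice of $p_0$: since $Q$ is non-constant, infinitely many primes arise as divisors of values $Q(n)$ with $n \in \Z$, by the classical elementary argument (if only $p_1, \dots, p_r$ ever divided such a value, then $Q(M p_1 \cdots p_r)$ for large $M$ yields a contradiction from the leading term of $Q$). From this infinite set I would discard the finitely many primes lying in $\mathscr{P}_0$ or dividing $\Delta_Q \cdot \Res(P,Q)$, and fix any remaining prime $p_0$ together with $m_0 \in \Z$ satisfying $Q(m_0) \equiv 0 \pmod{p_0}$. Then $p_0 \nmid \Delta_Q$ forces $Q'(m_0) \not\equiv 0 \pmod{p_0}$, and $p_0 \nmid \Res(P,Q)$ ensures $P(m_0) \not\equiv 0 \pmod{p_0}$, since $\Res(P,Q)$ vanishes modulo $p_0$ as soon as $P$ and $Q$ share a root there.

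For the two liftings: Taylor expansion gives $Q(m_0 + j p_0) \equiv Q(m_0) + j p_0 Q'(m_0) \pmod{p_0^2}$; dividing by $p_0$, the resulting linear congruence in $j$ has a unique solution $j_0 \bmod p_0$ because $Q'(m_0)$ is invertible mod $p_0$. Set $n_1 = m_0 + j_0 p_0$, so that $p_0^2 \mid Q(n_1)$, and note $n_1 \equiv m_0 \pmod{p_0}$ still ensures $P(n_1), Q'(n_1) \not\equiv 0 \pmod{p_0}$. Now take $n = n_1 + l p_0^2$ with $l$ to be chosen; then $Q(n) \equiv Q(n_1) + l p_0^2 Q'(n_1) \pmod{p_0^3}$ and $P(n) \equiv P(n_1) \pmod{p_0}$, whence
\[
P(n) \cdot p_0^{-2} Q(n) \equiv P(n_1) \cdot p_0^{-2} Q(n_1) + l \cdot P(n_1) Q'(n_1) \pmod{p_0}.
\]
Setting this equal to $1$ modulo $p_0$ is a linear congruence in $l$ whose leading coefficient $P(n_1) Q'(n_1)$ is invertible modulo $p_0$, so a unique $l \in \{0, 1, \dots, p_0 - 1\}$ works. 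Replacing $n$ by $n + k p_0^3$ for a sufficiently large positive integer $k$ preserves every congruence modulo $p_0^3$ and makes $n$ positive, as required.

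The one non-routine ingredient is the existence of a suitable $p_0$, that is, the classical fact that a non-constant integer polynomial admits prime divisors of its values outside any prescribed finite set of primes; the rest is elementary $p$-adic adjustment organized as a two-step Hensel lift. Observe that the resulting congruence $p_0^{-2} P(n) Q(n) \equiv 1 \pmod{p_0}$ automatically implies $p_0 \nmid P(n)$ and $p_0 \nmid Q(n)/p_0^2$, hence $p_0^2 \| Q(n)$, which delivers the ``in particular'' clause of the statement.
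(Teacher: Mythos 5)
Your argument is correct and is essentially the standard proof of this lemma; the paper itself does not prove it but defers to Manduchi, whose argument is the same two-step construction (choose a prime $p_0$ dividing some value of $Q$ and avoiding a finite bad set, then adjust $n$ modulo $p_0$ and modulo $p_0^2$ to force first $p_0^2\mid Q(n)$ and then the quadratic-residue condition). One small repair: from $p_0\mid Q(m_0)$ and $p_0\mid Q'(m_0)$ you can only conclude $p_0\mid \mathrm{Res}(Q,Q')=\pm\,\mathrm{lc}(Q)\,\Delta_Q$, so to guarantee $Q'(m_0)\not\equiv 0\pmod{p_0}$ you should also exclude the primes dividing the leading coefficient of $Q$ from your choice of $p_0$; with that finite enlargement of the excluded set, the rest of your argument (including the identity $\mathrm{Res}(P,Q)=AP+BQ$ to rule out $p_0\mid P(m_0)$, the two Hensel-type liftings, and the shift by $kp_0^3$ to make $n$ positive) goes through.
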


We refer to \cite{Manduchi} for a proof of this elementary lemma.
Observe that there is no need of the Squarefree conjecture in Lemma \ref{variationpotmult}.

Thus, for any family of elliptic curves $\E$ with a place of type $I_m$ or $I_m^*$ ($m\geq1$) whose associated polynomial is $Q$, put $P=-\frac{c_6(t)}{Q(t)^3}$. Manduchi's lemma on $P$ and $Q$ guaranties the existence of a $p_0$ such that $p_o^{-2}P(t)Q(t)\equiv c_6(t)\cdot\square\mod p_0\equiv1\mod p_0$, and thus $\left(\frac{-c_6(t)}{p_0}\right)=+1$.

\end{itemize}

\subsection{Variation of the global root number}

\begin{propo}\label{variation3}
Let $\E$ be a family of elliptic curves
. Let $N_\E$ be the integer and $R_\E(T)$ be the polynomial given by Proposition \ref{existencedunM}. 
Let $t_1,t_2\in\Z$, and suppose they satisfy the following properties. 

\begin{enumerate}
\item We have $t_1\equiv t_2\mod N_\E$, a non-zero congruence class.
\item We have $\lambda(M(t_1))=\lambda(M(t_2))$
\item For every primitive factor $R_i$ of $R_\E$ and $j=1,2$ one has $R_i(t_j)>0$. 

\item For a certain $Q_0$ and some $c\in\Z$, one has
\begin{enumerate}
\item $Q_0(t_1)=c^2l$ where $l$ is a squarefree integer coprime to $N_\E$,
\item $Q_0(t_2)=c^2q_0^2l'$ where $l'$ is a squarefree integer coprime to $N_\E$, and $q_0$ is a prime number which does not divide $\delta$ and such that 
\begin{enumerate}\item $-p_0^{-6}c_6(t_i)$ is a square modulo $q_0$ for $i=1,2$, if $Q_0$ has type $I_m^*$ ($m\geq1$),
\item $\left(\frac{-3}{p_0}\right)=-1$ if $Q_0$ has type $II$, $II^*$, $IV$ or $IV^*$;
\item $\left(\frac{-1}{p_0}\right)=-1$ if $Q_0$ has type $III$, $III^*$.
\end{enumerate}
\end{enumerate}
\item For every $Q\not=Q_0$ of bad, non $I_0^*$ reduction,
one has $Q(t_1)=c_Q^2l_Q$ and  $Q(t_2)=c_Q^2l_Q'$where $l_Q$ and $l_Q'$is squarefree integers coprime to $N_\E$, and $c_Q\in\Z$.
\end{enumerate}

Then, we have \[W(\E_{t_1})=-W(\E_{t_2}).\]
\end{propo}

\begin{proof}
By Theorem \ref{formuledusigne}, we have:
$$W(\E_{t_1})=\lambda(M_\E(t_1))\prod_{p\mid\delta}{W_p(\E_{t_1})}\prod_{P\in\mathscr{M}}{g_P(t_1)}\prod_{P\in\mathscr{B}}{h_P(t_1)}.$$ 
By Hypotheses (1), (2) and (3), we have
$$W(\E_{t_1})=\lambda(M_\E(t_2))\prod_{p\mid\delta}{W_p(\E_{t_2})}\prod_{P\in\mathscr{M}}{g_P(t_2)}\prod_{P\in\mathscr{B}}{h_P(t_1)}.$$ 

The rest of the proof is based essentially on Propositions \ref{variation3} or \ref{variation3bis}. By hypotheses (4), 
$h_{Q_0}(t_1)=-h_{Q_0}(t_2)$
and finally by Hypotheses (5), for every $Q\not=Q_0$ of bad reduction
$h_Q(t_1)=h_Q(t_2)$. Hence

\begin{align*}W(\E_{t_1})&=-\lambda(M_\E(t_2))\prod_{p\mid\delta}{W_p(\E_{t_2})}\prod_{P\in\mathscr{M}}{g_P(t_2)}\prod_{P\in\mathscr{B}}{h_P(t_2)}\\
&=-W(\E_{t_2})\end{align*}

\end{proof}

Very similarly we have:
\begin{propo}\label{variation3bis}
Do the same hypotheses (1) to (3) as in Proposition \ref{variation3}, and suppose additionally that for a certain $Q_0$ and some $c\in\Z$, one have $Q_0(t_1)=c^2l$ and $Q_0(t_2)=c^2l'^2$ where $l,l'$ are squarefree integers coprime to $N_\E$. Then $$W(\E_{t_1})=W(\E_{t_2}).$$
\end{propo}

There is also another approach to making the root number vary:

\begin{propo}\label{variationimpaire}

Let $\E$ be a family of elliptic curves over $\Q$ with no place $I_m$ nor $I_m^*$. Let $N_\E$ be the integer and $R_\E(T)$ be the polynomial given by Proposition \ref{existencedunM}.
Let $t_1,t_2\in\Z$. Suppose they satisfy the following properties. 

\begin{enumerate}
\item We have $t_1\equiv t_2\mod N_\E$, a non-zero congruence class.
\item For a fixed primitive factor $R_{i_o}$ of $R_\E$ one has $R_i(t_1)>0$ and $R_i(t_2)<0$
\item For every other primitive factor $R_i$ of $R_\E$ one has $R_i(t_1)>0$ and $R_i(t_2)>0$. 

\item For every place of bad (but not insipid) reduction, one has
$Q(t_1)=c_Q^2l_Q$ and $Q(t_2)=c_Q^2l_Q'$ where $l_Q$ and $l_Q'$ are squarefree integers coprime to $N_\E$.
\end{enumerate}

Then, we have \[W(\E_{t_1})=-W(\E_{t_2}).\]
\end{propo}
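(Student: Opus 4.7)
The plan is to follow the template of Proposition \ref{variation3}, but to source the sign flip from a different component of the formula of Theorem \ref{formuledusigne}: instead of extracting $-1$ from a single $h_Q$, I extract it from the $\varphi_\E$ factor via the Remark following Proposition \ref{existencedunM}. Applying Theorem \ref{formuledusigne} at $t=t_j$ for $j=1,2$, I write
\[
W(\E_{t_j})=\lambda(M_\E(t_j))\cdot\varphi_\E(t_j)\cdot\prod_{P\mid\Delta_\E}h_P(t_j),
\]
where $\varphi_\E(t):=\prod_{p\mid\delta}W_p(\E_t)\prod_{P\in\mathscr{B}}g_P(t)$, and I aim to show that precisely one of the three ingredients (the middle one) changes sign while the other two are unchanged.

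First, since the hypothesis forbids reductions of type $I_m$ and $I_m^*$, the set $\mathscr{M}$ of multiplicative places is empty, so $M_\E=1$ and $\lambda(M_\E(t_j))=1$ for both $j$. Next, hypotheses (1)--(3) match precisely the conditions of the Remark following Proposition \ref{existencedunM}: $t_1\equiv t_2\bmod N_\E$, and every irreducible factor of $R_\E$ keeps the same sign at $t_1$ and $t_2$ except one, $R_{i_o}$, whose sign flips. That remark then yields $\varphi_\E(t_1)=-\varphi_\E(t_2)$.

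The remaining task is to verify $\prod_{P\mid\Delta_\E}h_P(t_1)=\prod_{P\mid\Delta_\E}h_P(t_2)$. I treat the bad places one at a time. If $P$ has type $I_0^*$, then $h_P\equiv +1$ by Table \ref{contributions}. If $P$ is insipid of type $II,II^*,IV$ or $IV^*$, Lemma \ref{constance24} yields $h_P\equiv +1$; Lemma \ref{constance3} handles the insipid cases $III,III^*$ analogously. For any other $P\in\mathscr{B}$, i.e.\ non-insipid and of additive non-$I_0^*$ type (necessarily $II,II^*,III,III^*,IV$ or $IV^*$ under the blanket hypothesis), hypothesis (4) places $t_1,t_2$ in the setting of Lemma \ref{constancehpotmult} with $q_0=1$; the coprimality $\gcd(c_Q,l_Q)=1$ is automatic from uniqueness of the square-times-squarefree decomposition of a nonzero integer. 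Case B of that lemma then delivers $h_Q(t_1)=h_Q(t_2)$.

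Multiplying the three contributions furnishes $W(\E_{t_1})=-W(\E_{t_2})$, as desired. The Remark after Proposition \ref{existencedunM} does the heavy lifting, and the lemmas of Section \ref{sectionvariation} cover every type of $h_P$-contribution; the only mild point to verify is that "squarefree part coprime to $N_\E$" in hypothesis (4) is consistent with the coprimality assumption inside Lemma \ref{constancehpotmult}, which is the only real subtlety and reduces to the uniqueness of the square-times-squarefree decomposition just mentioned.
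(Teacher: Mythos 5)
Your proposal is correct and follows essentially the same route as the paper: the sign flip is extracted from $\varphi_\E$ via the sign change of the single factor $R_{i_o}$ (the Remark after Proposition \ref{existencedunM}), while the $\lambda(M_\E)$ factor is trivial and the $h_P$ contributions are shown not to vary. The only difference is cosmetic: where the paper simply cites Proposition \ref{variation3bis} for the invariance of the remaining factors, you unpack that citation directly through Lemmas \ref{constance24}, \ref{constance3} and case B of Lemma \ref{constancehpotmult}, which is a slightly more self-contained rendering of the same argument.
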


\begin{proof} The crux of the proof is the function $\varphi_\E:\Z\rightarrow\{-1,+1\}$ defined in Proposition \ref{existencedunM} as \[t\mapsto \varphi_{\E}(t):=\prod_{p\mid\delta}{W_p(\E_{t})}\prod_{P\in\mathscr{B}}{g_{P}(t)}.\] Let $R_\E=\prod_{i}{R_i}$ be the polynomial given by Proposition \ref{existencedunM}. For a fixed primitive factor $R_{i_o}$ of $R_\E$ one has $R_i(t_1)>0$ and $R_i(t_2)<0$. For every other primitive factor $R_i$ of $R_\E$ one has $R_i(t_1)>0$ and $R_i(t_2)>0$. Then by construction of $R$, we have $\varphi_\E(t_1)=-\varphi_\E(t_2)$.

The other parts of the root number formula do not vary by Proposition \ref{variation3bis}. Hence $W(\E_{t_1})=-W(\E_{t_2})$.

\end{proof}

\section{Analytical conjectures and sieves}\label{sectionconjectures}

In this section, we recall the two analytical number theory conjectures applied on a polynomial $f(T)\in\Z[T]$ that appear in Theorem \ref{thmgeneral}:
\begin{enumerate}
\item Chowla's conjecture,
\item Squarefree conjecture.
\end{enumerate}
They respectively correspond to asymptotic approximations of the proportion of the values of $f(t)$ with the following properties:
\begin{enumerate}
\item a certain parity of the number of prime factors,
\item a fixed square part.
\end{enumerate}

Notice that we recall here exclusively the statements in the case where $f(t)$ is one-variable polynomial since it is sufficient for our purpose, and that for details of the 2-variable homogenous case, we refer the interested reader to our previous paper \cite[Section 2]{Desjardins1}.

We will make a certain number of assumptions on the polynomial $f(T)$ throughout this section:
first, it is natural to assume that $f$ is {\it primitive} (i.e. that its content is equal to 1), and second, that $f$ is squarefree (i.e. that there is no polynomial $f_0$ such that $f_0^2\mid f$), which is the same as supposing that its discriminant $D_f$ is non-zero.
  
We denote by $|\cdot|$ the usual absolute value on $\bR$ or the max norm on $\bR^2$. Let $\cA=a+N\Z$ (where $N\not=0$) be an arithmetic progression. We also introduce the notations
\[\bZ(X)=\left\{t\in\bZ\;|\;| t |\leq X\right\},\;\; \cA(X):=\left\{t\in\cA\;|\;|t|\leq X\right\}\quad{\rm and}\quad \A(X):=\sharp\cA(X),\]
so that $\A(X)$ is roughly proportional to $X$. 
More precisely, we have $\A(X)\sim \frac{X}{N}$.

\subsection{Squarefree conjecture}

This conjecture estimates the proportion of squarefree values of a polynomial in an arithmetic progression. 

\noindent{\bf Notations.} Put 
 $\delta_{f}:=\gcd\left\{f(t)\;|\;t\in\bZ\right\}$, then  let $d_{f}$ be the smallest integer such that $\delta_f/d_{f}$ is squarefree. Write
 $d_{f}=\prod_pp^{\nu_p}$.
We denote by $t_f({p})$ the number of solutions modulo $p^{2+\nu_p}$ of $f(t)d_{f}^{-1}\equiv 0\mod p^2$, or $f(t)\equiv 0\mod p^{2+\nu_p}$ and we put
\[C_f:=\prod_p\left(1-\frac{t_f({p})}{p^{2+\nu_p}}\right).\]
Note that 
the product defining $C_f$ is absolutely convergent since $t_f(p)=O(1)$.

To study the ``almost"\footnote{We allow the squarefree part of the values of $f$ to be a certain fixed integer.} squarefree values we proceed as follows. Let $\phi:\Z\rightarrow\cA$ the bijection with $\phi(t)=a+Nt$. We put
$g:=f\circ\phi$, $d_{f,\cA}:=d_{g}=\prod_{p}{p^{\mu_p}}$, and define $t_{f,\mathcal{A}}(p)$ as the number of solutions modulo $p^{2+\nu_p}$ of $g(t)d_{f,\mathcal{A}}^{-1}\equiv0\pmod{p^{2+\mu_p}}$,
\[C_{f,\cA}:=\frac{1}{N}\prod_{p\nmid N}{\left(1-\frac{t_{f,\mathcal{A}}(p)}{p^{2+\mu_p}}\right)}\] 
and let $Sqf_\cA(X)$ to be the number of elements $t\in\cA(X)$ such that $f(t)$ is squarefree.
We can write the Squarefree conjecture in the following form:

\begin{conjecture}\label{sfc1} (Squarefree Conjecture on arithmetic progressions)
\begin{equation}\label{nbdesqfA}Sqf_\cA(X)=C_{f,\cA}X+o(X)
\end{equation}
\end{conjecture}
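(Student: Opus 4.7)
\medskip

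\noindent\textbf{Proof plan.} The natural approach is a squarefree sieve, i.e.\ inclusion--exclusion on the square divisors of $f(t)$. First I would reduce to the progression by writing $t=a+Nk$ with $k\in\Z$ and $|k|\le X/N+O(1)$, so that studying $f$ on $\cA$ amounts to studying the polynomial $g(k):=f(a+Nk)$ on $\Z$. After this reduction, it suffices to prove the unrestricted version of the conjecture (with $\cA=\Z$) for an arbitrary squarefree primitive polynomial, and then to track the normalisation $1/N$ that appears in the definition of $C_{f,\cA}$.

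\medskip

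\noindent I would then expand the indicator of squarefreeness by M\"obius inversion. Writing $f(t)=d_f\cdot (f(t)/d_f)$ and using $\mathbf 1[m\text{ squarefree}]=\sum_{d^2\mid m}\mu(d)$, one obtains
\begin{equation*}
Sqf_\cA(X)=\sum_{d\ge 1}\mu(d)\,\#\bigl\{t\in\cA(X)\;:\;d^2\mid f(t)/d_f\bigr\}.
\end{equation*}
Split this sum at a parameter $Y=Y(X)$ (classically $Y\asymp X^{1/2}$). For $d\le Y$, by the Chinese remainder theorem the local density at each prime $p$ is exactly $t_f(p)/p^{2+\nu_p}$, and a standard lattice-point count gives
\begin{equation*}
\#\{t\in\cA(X):d^2\mid f(t)/d_f\}=\tfrac{2X}{N}\cdot\tfrac{t_f(d)}{d^{2+\nu_d}}+O(t_f(d)),
\end{equation*}
where $t_f$ is extended multiplicatively. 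Summing against $\mu(d)$ and extending the resulting absolutely convergent Euler product to all primes produces the predicted main term $C_{f,\cA}X$, with an acceptable error from the tail $d>Y$ of the main term and from the $O(t_f(d))$ error.

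\medskip

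\noindent\textbf{The main obstacle.} The genuine difficulty is to bound the contribution of large divisors, i.e.\ the count
\begin{equation*}
S(Y,X):=\#\bigl\{t\in\cA(X):\exists\,d>Y\text{ with }d^2\mid f(t)/d_f\bigr\},
\end{equation*}
and to show that $S(Y,X)=o(X)$ for a suitable $Y$. For $\deg f\le 3$ this can be handled unconditionally by Hooley's method: one separates the primes $p\le X^{1/2}$ (handled by elementary counting of solutions of $f(t)\equiv 0\pmod {p^2}$ via estimates on the discriminant) from the primes $p>X^{1/2}$ (handled by bounding the number of pairs $(t,p)$ through a geometry-of-numbers argument on the associated algebraic curve, whose genus is controlled by $\deg f$). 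For $\deg f\ge 4$ no unconditional method is known, and one must appeal to the abc conjecture, which is exactly why the statement is left as a conjecture for the purpose of this paper.

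\medskip

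\noindent Finally, to recover the progression version with its exact constant $C_{f,\cA}$, I would verify that the above sieve applied to $g=f\circ\phi$ produces the factor $1/N$ and restricts the Euler product to primes $p\nmid N$, matching the definition of $C_{f,\cA}$ given before the statement.
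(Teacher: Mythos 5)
The statement you were asked to prove is labelled a \emph{Conjecture} in the paper, and the paper contains no proof of it: it is an open analytic statement, and the only thing the paper actually establishes (or rather cites) is Hooley's Theorem~\ref{theosqf}, namely the special case where every irreducible factor of $f$ has degree at most $3$. So there is no ``paper's own proof'' to compare against, and your proposal --- quite correctly --- does not prove the statement either. Your sketch of the sieve framework is sound: the reduction $t=a+Nk$, $g=f\circ\phi$, the M\"obius expansion of the squarefreeness indicator, the local densities $t_f(p)/p^{2+\nu_p}$ giving the Euler product $C_{f,\cA}$, and the identification of the tail $S(Y,X)$ (large square divisors $d>Y$) as the genuine obstruction are all the standard and correct way to think about this. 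Your final paragraph also correctly accounts for the factor $1/N$ and the restriction of the product to $p\nmid N$ in $C_{f,\cA}$. (One trivial normalisation quibble: the paper takes $\cA(X)=\{t\in\cA:|t|\le X\}$ and asserts $\A(X)\sim X/N$, so the main term should carry $X/N$ rather than your $2X/N$ to match the stated constant; this is a bookkeeping convention, not a mathematical issue.)

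The one thing to be explicit about is that what you have written is a proof \emph{modulo} the bound $S(Y,X)=o(X)$, and that bound is precisely the content of the conjecture: it is known unconditionally only for $\deg f\le 3$ (Hooley's method, as you describe), and conditionally on $abc$ in general (Granville). You say this yourself in your ``main obstacle'' paragraph, so your proposal is honest and accurate --- but it should be presented as (i) a justification of why the conjecture is the natural prediction, plus (ii) a proof sketch of Theorem~\ref{theosqf}, rather than as a proof of Conjecture~\ref{sfc1}. For the purposes of the paper this is exactly the right division of labour, since Theorem~\ref{thmgeneral} assumes the conjecture as a hypothesis and Corollary~\ref{corogeneral} invokes only the unconditional degree-$\le 3$ case.
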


It is known to hold in the following cases:

\begin{thm}\label{theosqf}  (Hooley \cite{Hool}) Let $f\in\Z[T]$ be a primitive squarefree polynomial with integer coefficients in one variable. Suppose that every irreducible factor of $f$ has degree at most $3$. 
Then $f$ verifies the Squarefree conjecture on arithmetic progressions.
\end{thm}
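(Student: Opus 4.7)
The strategy is a classical squarefree sieve of Hooley type. Using the identity $\mathbf{1}_{\mathrm{sqf}}(n) = \sum_{d^2 \mid n}\mu(d)$, I decompose
\[
Sqf_\cA(X) \;=\; \sum_{d \geq 1} \mu(d)\, N_d(X), \qquad
N_d(X) := \#\bigl\{t \in \cA(X) : d^2 \mid f(t)/d_f\bigr\}.
\]
Setting $\rho(d) := \#\{t \pmod{Nd^{2+\nu_d}} : t \equiv a \pmod N \text{ and } d^2 \mid f(t)/d_f\}$, a direct congruence count gives $N_d(X) = \frac{2X}{Nd^2}\rho(d) + O(\rho(d))$, and by the Chinese Remainder Theorem $\rho$ is multiplicative with $\rho(p) = t_f(p)$.

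I would then split the sum over $d$ into three ranges, determined by parameters $1 \ll Y \ll Z \leq X^{\deg f/2}$:
\begin{enumerate}
\item \emph{Small range} $d \leq Y$: the absolute convergence of the Euler product defining $C_f$ (equivalently $\sum_d |\mu(d)|\rho(d)/d^2 < \infty$) lets me extend the partial sum to all $d$, giving the main term $C_{f,\cA}\, X$ with error $O(X/Y)$.
\item \emph{Intermediate range} $Y < d \leq Z$: for primes $p$ outside the finite set dividing $N\delta_f D_f$, the squarefreeness of $f$ gives $\rho(p^2) = O_f(1)$, so this range contributes at most $O(X/Y) + O(Z/\log Z)$.
\item \emph{Large range} $d > Z$: since $|f(t)| \ll X^{\deg f}$, the divisibility $d^2 \mid f(t)$ forces $d \leq CX^{\deg f/2}$; writing $f(t) = d^2 m$ with $|m| \ll X^{\deg f}/Z^2$, one must count integer solutions $(t, s)$ of $f(t) = m s^2$ for each such $m$.
\end{enumerate}

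The main obstacle is step 3, and it is precisely here that the degree hypothesis enters. For an irreducible factor $f_i$ of degree $\leq 3$, the auxiliary equation $f_i(t) = m s^2$ defines, for each $m$, a curve of genus at most $1$ in the $(t, s)$-plane; its integer points lying in the box $|t| \leq X$, $s > Z$ can be bounded uniformly in $m$ via Hooley's determinant/$\Delta$-method argument, producing a total contribution $o(X)$ after summation over $|m| \ll X^{\deg f}/Z^2$. A reducible $f$ is handled factor by factor, with the cross terms absorbed into the intermediate-range estimate (the resultant controls the simultaneous divisibility by $p^2$ coming from two different irreducible factors). For irreducible factors of degree $\geq 4$ the analogous curves have higher genus and no comparable unconditional bound is known, which is why the theorem is stated under the cubic restriction. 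Choosing for instance $Y = \log X$ and $Z$ a suitable power of $X$ to balance the error terms, and assembling the three ranges, yields the stated asymptotic $Sqf_\cA(X) = C_{f,\cA}\, X + o(X)$.
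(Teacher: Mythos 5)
The paper offers no proof of this statement: it is quoted verbatim from Hooley \cite{Hool}, so your proposal has to be measured against Hooley's actual argument rather than against anything in the text. Your skeleton (M\"obius decomposition, three ranges, multiplicativity of the local densities, resultants to control cross-divisibility between distinct irreducible factors) is the standard and correct frame, and it does dispose of the small and intermediate ranges. The gap is entirely in step~3, which is precisely the content of Hooley's theorem.

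Concretely: your intermediate-range error $O(Z/\log Z)$ (or $O(Z^{1+\epsilon})$ without restricting to primes) forces $Z\ll X^{1-\epsilon}$, so in the large range the cofactor $m=f(t)/d^2$ runs over $\gg X^{\deg f}/Z^{2}\ge X^{1+2\epsilon}$ values when $\deg f=3$. A bound on the integral points of each curve $f_i(t)=ms^{2}$ that is merely \emph{uniform in $m$} can therefore never yield a total of $o(X)$: even one point per curve on average already exceeds the target, and the determinant method gives only $O_\epsilon(X^{1/3+\epsilon})$ points per curve, which after summation over $m$ is of size $X^{4/3+\epsilon}$. (The genus remark is also a red herring: for quadratic factors the curves are conics with infinitely many integral points, yet that case is easy because $m$ is then confined to $O(\log^{c}X)$ values; conversely "uniformly bounded integral points on genus-one curves" is not a theorem one can invoke.) What is actually needed, and what Hooley supplies, is a \emph{global} estimate showing that all but $o(X)$ of the admissible $m$ contribute no solution at all --- his argument counts the pairs $(t,p)$ with $p^{2}\mid f(t)$ directly, exploiting the spacing of consecutive solutions of $f(t)=ms^{2}$ with $s$ large and the structure of roots of $f$ modulo $p^{2}$, rather than bounding each fibre separately. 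As a minor point, the ``$\Delta$-method'' is Hooley's name for an unrelated technique in additive number theory, and the determinant method postdates his 1967 paper. To repair the proposal you would need to replace the uniform-per-curve claim by Hooley's lemma (or an equivalent) bounding $\sum_{m}\#\{t\le X: f(t)=ms^{2},\ s>Z\}$ as a whole.
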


It leads to the following \emph{squarefree sieve}\footnote{A homogeneous squarefree sieve is given in \cite{GM} and is unconditional for 2-variable homogeneous $f\in\Z[X,Y]$ whose irreducible factor have all degree less than 6.}:

\begin{coro}\label{sfcrible}
Let $f(T)\in\Z[T]$ be a polynomial such that no square of a non-unit in $\Z[T]$ divides $f(T)$, and such that every factor of $f$ has degree $\leq3$.

Fix 
\begin{itemize}
\item a sequence $S=(p_1,\cdots,p_s)$ of distinct prime numbers and
\item a sequence $T=(t_1,\cdots,t_s)$ of non-negative integers.
\end{itemize}
Let $N$ be an integer such that $p_1^{t_1+1}\cdots p_s^{t_s+1}\mid N$ and that $p^2\mid N$ for all primes $p\leq(\deg f)$.

Suppose that there exists an integer $a$ such that 
\begin{enumerate}
\item $f(a)\not\equiv0\mod p^2$, whenever $p\mid N$ and $p\not=p_i$ for any $i$,
\item and such that $v_{p_i}(f(a))=t_i$ for every $i=1,\dots,s$.
\end{enumerate}

Then there are infinitely many integers $t$ such that 
\begin{enumerate}
\item $t\equiv a\mod N$,
\item and such that $f(t)=p_1^{t_1}\cdots p_s^{t_s}\cdot l$, where $l$ is squarefree and $v_{p_i}(l)=0$ for all $i=1,\dots,s$.
\end{enumerate}
\end{coro}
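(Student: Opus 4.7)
The plan is to reduce the claim to Hooley's Theorem~\ref{theosqf} after the affine substitution $t = a + Nu$. Set $g(u) := f(a+Nu) \in \Z[u]$. Since $u \mapsto a+Nu$ is affine and invertible over $\Q$, the polynomial $g$ remains squarefree in $\Z[u]$ and each of its irreducible factors has the same degree as the corresponding factor of $f$, hence degree at most $3$. Thus Theorem~\ref{theosqf} applies to $g$.

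First I would extract the ``automatic'' arithmetic of $g$ coming from the choice of $N$. For every $u \in \Z$, the hypothesis $p^2 \mid N$ for primes $p \mid N$ with $p \neq p_i$ (which holds since such $p \leq \deg f$) forces $g(u) \equiv f(a) \pmod{p^2}$, and the condition $p_i^{t_i+1} \mid N$ forces $g(u) \equiv f(a) \pmod{p_i^{t_i+1}}$. Combined with hypotheses (1) and (2) on $a$, this yields $p^2 \nmid g(u)$ for every $p \mid N$ with $p \neq p_i$, and $v_{p_i}(g(u)) = t_i$ exactly. Therefore $l(u) := g(u)/(p_1^{t_1}\cdots p_s^{t_s})$ is always an integer, coprime to each $p_i$, and $l(u)$ is squarefree if and only if no prime $q \nmid N$ satisfies $q^2 \mid g(u)$. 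The problem thus reduces to producing infinitely many $u$ with this last property.

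I would then apply the standard squarefree sieve to $g$: by M\"obius inversion,
\[
\#\{|u|\leq Y : q^2 \nmid g(u)\ \forall\, q \nmid N\} = \sum_{\gcd(d,N)=1}\mu(d)\,\#\{|u|\leq Y : d^2 \mid g(u)\}.
\]
Splitting at $d = Y^{1/2}$, the small-$d$ part gives the expected main term $2Y \prod_{q \nmid N}(1 - \rho_g(q^2)/q^2)$, where $\rho_g(q^2)$ counts the roots of $g$ modulo $q^2$. The main obstacle, and the reason the degree-$3$ hypothesis is needed, is bounding the large-$d$ contribution by $o(Y)$; this is precisely what Theorem~\ref{theosqf} delivers for $g$.

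The final step is to check that the Euler product is strictly positive. Any $q \nmid N$ must satisfy $q > \deg f = \deg g$, because the hypothesis forces $q^2 \mid N$ for every $q \leq \deg f$. For such $q$, the reduction $g \bmod q$ is a nonzero polynomial of degree $\deg g < q$ (since $g$ is squarefree in $\Z[u]$, up to adjusting for the leading coefficient which is controlled by the same bound $q > \deg f$ used above), so it cannot vanish on every residue class mod $q$, whence $\rho_g(q^2) < q^2$. Every local factor is therefore strictly positive, the density is positive, and infinitely many suitable $u$ exist.
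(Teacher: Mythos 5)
Your argument is essentially the proof the paper intends: the corollary is stated there without proof as a direct consequence of Hooley's Theorem~\ref{theosqf} via the arithmetic-progression form of the squarefree conjecture, i.e.\ precisely your substitution $g=f\circ\phi$ with $\phi(u)=a+Nu$, the observation that the conditions at primes dividing $N$ are rigid, and the positivity of the Euler product over $q\nmid N$. The only loose end is in the positivity step: besides the leading coefficient, you should also account for primes $q\nmid N$ dividing the fixed divisor of $f$ (the paper's $d_f$ and $\nu_p$ normalization, under which one counts roots of $f(t)d_f^{-1}\equiv 0 \bmod q^2$ rather than of $g\bmod q$, which may be the zero polynomial for such $q$); this is a standard adjustment and does not affect the conclusion.
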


\begin{rem}
Let $R\in\Z$ be an integer, than we can as well require that the infinitely many integers $t$ given by the sieve are such that $t\geq R$ (or equivalently such that $t\leq R$).
\end{rem}

\medskip

\subsection{Chowla's conjecture}

This second conjecture estimates the proportion of the values $f(t)$ with a certain parity of the number of prime factors. Recall that Liouville's function (Definition \ref{defLiouvillefct}) already appears since section \ref{sectionformuledusigne}.

\begin{conjecture}\label{CC} (Chowla's conjecture) Let $f$ be a primitive squarefree polynomial with integer coefficients. The following estimation holds for every arithmetic progression $\cA$:
\begin{equation}
\sum_{t\in\cA(X)}\lambda(f(t))=o(X)\end{equation}
\end{conjecture}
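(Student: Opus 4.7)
The statement is Chowla's conjecture for polynomial values along arithmetic progressions, which is a well-known open problem once $\deg f \geq 2$. My plan is therefore only to dispose of the tractable case $\deg f = 1$ and then indicate precisely where the obstruction sits.

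For $\deg f = 1$, write $f(T) = aT + b$ and $\cA = c + N\Z$. The change of variables $t \mapsto m = at + b$ turns $\sum_{t \in \cA(X)} \lambda(f(t))$ into $\sum \lambda(m)$, where $m$ runs through a fixed arithmetic progression of integers of magnitude $O(X)$. Using the Dirichlet convolution identity $\lambda = 1_{\square} \ast \mu$ together with partial summation, cancellation of $\lambda$ in an arithmetic progression reduces to cancellation of $\mu$ in arithmetic progressions, which is the Siegel--Walfisz theorem. This yields the $o(X)$ bound unconditionally.

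For $\deg f \geq 2$ I would attempt the standard combinatorial identity approach: apply a Heath--Brown or Vaughan decomposition to $\lambda$, then split $\sum_{t \in \cA(X)} \lambda(f(t))$ into type I sums $\sum_{d \leq D} \sum_{t:\, d \mid f(t)} 1$ and type II bilinear sums $\sum_m \sum_n a_m b_n \#\{t \in \cA(X) : f(t) = mn\}$. The type I contribution can be handled for $d$ up to a small power of $X$ by counting roots of $f \bmod d$, which is classical. The type II sums form the heart of the problem and demand genuine bilinear cancellation in the Liouville function evaluated at polynomial arguments.

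The main obstacle is precisely these type II bounds: no such cancellation is known for $\deg f \geq 2$. The strongest existing results, namely cancellation of $\lambda$ in short intervals and logarithmically averaged versions of Chowla, fall short of the pointwise $o(X)$ statement required here. For this reason the present paper assumes Chowla's conjecture in Theorem \ref{thmgeneral} rather than proves it, and the unconditional results of Corollary \ref{corogeneral} only cover the degree one case via Siegel--Walfisz.
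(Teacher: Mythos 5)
Your proposal correctly recognizes that this statement is an open conjecture for $\deg f\geq 2$ and that the paper offers no proof of it, only recording the linear case as Theorem \ref{theochowla} (Hadamard--de la Vall\'ee Poussin); your treatment of the linear case via $\lambda=1_{\square}\ast\mu$ and cancellation of $\mu$ in arithmetic progressions is a standard, correct route to exactly that known case (indeed, since the modulus $aN$ is fixed, the classical prime number theorem in arithmetic progressions already suffices and Siegel--Walfisz is not needed). This matches the paper's stance, which assumes the conjecture as hypothesis (i) of Theorem \ref{thmgeneral} and obtains unconditional results only when $\deg M_\E\leq 1$.
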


\medskip

A 2-variable homogeneous version is known to hold for polynomials up to degree 3. This is proven for non-irreducible polynomials in \cite{HelfChowla2}, and for irreductible polynomials in \cite{HelfChowla} and more recently in \cite{Lachand}. In our 1-variable case however Conjecture \ref{CC} is known to hold in the following case:

\begin{thm}[Hadamard -- de la Vall\'ee Poussin]\label{theochowla}  Let $f\in\Z[T]$ be a squarefree polynomial. Chowla's conjecture holds if $f$ is linear.
\end{thm}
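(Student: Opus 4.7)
The plan is to reduce the Chowla sum for a linear polynomial to a partial sum of the Liouville function along an arithmetic progression of integers, and then appeal to the Prime Number Theorem in arithmetic progressions. Write $f(T)=\alpha T+\beta$ with $\gcd(\alpha,\beta)=1$ and $\alpha\neq 0$; linear polynomials are automatically squarefree, so the only hypothesis is primitivity. For $\cA=a_0+N\Z$, parametrise $\cA\ni t=a_0+kN$; then $f(t)=c+dk$ with $c=\alpha a_0+\beta$ and $d=\alpha N$, and as $t$ runs over $\cA(X)$ the integer $k$ runs over an interval of length $2X/N+O(1)$. Hence
\[
\sum_{t\in\cA(X)}\lambda(f(t))\;=\;\sum_{\substack{n\in I_X\\ n\equiv c\pmod{d}}}\lambda(n)\;+\;O(1),
\]
where $I_X\subset\R$ is an interval of length $O(X)$; the $O(1)$ absorbs the at most one $t$ at which $f(t)=0$, with the convention $\lambda(0)=0$.

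It then suffices to show that for any modulus $d\neq 0$ and any residue $c$,
\[
\sum_{\substack{n\in I\\ n\equiv c\pmod{d}}}\lambda(n)\;=\;o(|I|).
\]
To reduce to a coprime residue, set $g=\gcd(c,d)$, $c=gc'$, $d=gd'$ with $\gcd(c',d')=1$. Each non-zero $n$ in the progression has the form $n=gm$ with $m\equiv c'\pmod{d'}$, and hence $\gcd(m,d')=1$; complete multiplicativity of $\lambda$ gives $\lambda(n)=\lambda(g)\lambda(m)$. The problem is thus reduced to proving that for any $q\geq 1$ and any $a$ with $\gcd(a,q)=1$,
\[
\sum_{\substack{1\leq n\leq Y\\ n\equiv a\pmod{q}}}\lambda(n)\;=\;o(Y).
\]

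Finally, expanding the residue-class indicator via Dirichlet characters modulo $q$ reduces this to bounding the twisted sums $\sum_{n\leq Y}\chi(n)\lambda(n)$ for each Dirichlet character $\chi$. Via the Dirichlet-series factorisation
\[
\sum_{n\geq 1}\frac{\chi(n)\lambda(n)}{n^s}\;=\;\frac{L(2s,\chi^2)}{L(s,\chi)}
\]
and a Tauberian argument of Landau type, each such character sum is $o(Y)$ provided $L(s,\chi)$ does not vanish on the line $\Re(s)=1$. For the principal character this is exactly the theorem of Hadamard and de la Vall\'ee Poussin on $\zeta(s)$; for non-principal characters it is the standard extension to Dirichlet $L$-functions. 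The only genuine obstacle is this boundary non-vanishing, which is precisely the input furnished by the classical theorem invoked in the statement.
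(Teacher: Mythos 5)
Your argument is correct, and it is exactly the route the paper's attribution points to: the paper states this result without proof, crediting Hadamard and de la Vall\'ee Poussin, i.e.\ the non-vanishing of $\zeta(s)$ and of the Dirichlet $L$-functions $L(s,\chi)$ on the line $\Re(s)=1$, which is precisely the input your reduction (linear substitution, gcd extraction by complete multiplicativity of $\lambda$, character decomposition, and the identity $\sum_n\chi(n)\lambda(n)n^{-s}=L(2s,\chi^2)/L(s,\chi)$ plus a Tauberian theorem) feeds on. No gaps; the only cosmetic point is that $f(t)$ may be negative or zero on part of the range, which you already absorb via $\lambda(|n|)$ and the $O(1)$ term.
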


\subsection{Combination of the conjectures}

In \cite[Theorem 2.9]{Desjardins1}, the author proves an analytical theorem for homogeneous polynomials of two variables indicating a sort of independence between the two properties of the values of a polynomial
\begin{itemize}
\item being ``squarefree"
\item having the same ``parity of the number of factors".
\end{itemize}
In this paper we need the following 1-variable analogue of that result, whose proof can be found in \cite[Th\'eor\`eme 1.3.8]{Desjardinsthese}:

\begin{thm}\label{conjecturesfacteurs}\cite[Theorem 2.9]{Desjardins1} Fix an element $\epsilon\in\{1,-1\}$. Let $f,g,h\in\Z[T]$ be squarefree primitive polynomials with no common factor. Assume the Squarefree conjecture holds for $f$ and $g$ and that Chowla's conjecture holds for $f$. Then for any arithmetic progression $\cA=N\Z+a$ (where $a$, $N\not=0$  are integers such that $a$ and $N$ are coprime), define $T(X)$ to be the number of pairs of integers $t\in\cA(X)$ such that
\begin{enumerate}
\item $\frac{f(t)g(t)}{d_{fg,\cA}}$ is not divisible by $p^2$ for any prime $p$ such that $p\nmid N$;
\item $\lambda(f(t))=\epsilon$
\item for all factor $h_i$ of $h$, one has $h_i(t)>0$.
\end{enumerate}

Then
\begin{equation}
T(X)=\frac{C_{fg,\cA}}{2}X+o\left(X\right).
\end{equation}
\end{thm}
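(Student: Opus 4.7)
The plan is to mirror the proof of \cite[Theorem 2.9]{Desjardins1}, specialized from the homogeneous two-variable setting to the present one-variable setting. First I would dispense with the sign condition~(3): each irreducible factor $h_i$ of $h$ has only finitely many real roots, so $h_i(t)$ has a constant sign on each of the rays $[T_0,+\infty)$ and $(-\infty,-T_0]$ for $T_0$ large enough. The condition that every $h_i(t)$ be positive therefore either fails eventually on both rays (in which case $T(X)=O(1)$ and the claim is trivial), or holds on at least one ray; by decomposing $\cA(X)$ and absorbing an error $O(1)$, I may restrict the count to a subset $\cA^+(X)\subseteq \cA(X)$ on which condition~(3) is automatic.

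Next I would write
\[
\mathbf{1}[\lambda(f(t))=\epsilon]=\frac{1+\epsilon\,\lambda(f(t))}{2},
\]
splitting $T(X)=\tfrac{1}{2} S(X)+\tfrac{\epsilon}{2} L(X)$, where $S(X)$ counts those $t\in\cA^+(X)$ satisfying~(1) and $L(X)$ is the same sum weighted by $\lambda(f(t))$. The Squarefree Conjecture~\ref{sfc1} applied to the product $fg$ -- which is squarefree and primitive because $f$ and $g$ are squarefree, primitive, and pairwise coprime (Gauss's lemma for primitivity) -- yields $S(X)=C_{fg,\cA}X+o(X)$, producing the announced main term.

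It remains to show $L(X)=o(X)$. I would expand the squarefree indicator by M\"obius inversion,
\[
\mathbf{1}[fg(t)/d_{fg}\text{ is squarefree away from }p\mid N]=\sum_{\substack{d\geq 1\\ \gcd(d,N)=1}}\mu(d)\,\mathbf{1}[d^2\mid fg(t)/d_{fg}],
\]
and truncate at a slowly growing parameter $D=D(X)$. For $d\leq D$, the divisibility condition $d^2\mid fg(t)/d_{fg}$ confines $t$ to a finite union of arithmetic progressions modulo $d^2 N$, on each of which Chowla's Conjecture~\ref{CC} for $f$ gives $\sum\lambda(f(t))=o(X)$. For $d>D$, the Hooley-type tail estimate built into the Squarefree Conjecture uniformly bounds the remaining contribution by $o(X)$ once $D\to\infty$. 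A diagonal choice of $D(X)$ balances the two regimes and yields $L(X)=o(X)$, whence $T(X)=\tfrac{C_{fg,\cA}}{2}X+o(X)$.

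The main obstacle is the uniformity issue in the low-$d$ range: Conjecture~\ref{CC} as stated provides only a qualitative $o(X)$ bound per arithmetic progression, with no explicit dependence on the modulus. This forces $D(X)$ to grow very slowly and requires a standard diagonalization (pick $D$ as large as possible while keeping the Chowla error below $X/D$ on each of the $\ll D$ affected progressions), rather than any direct estimate. This balancing act is precisely the delicate heart of the ``Squarefree Liouville Sieve'' of \cite{Desjardins1}; once set up in one variable, the remainder of the argument transcribes word-for-word from the homogeneous case.
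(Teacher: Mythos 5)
Your outline is essentially the argument the paper relies on: the paper itself gives no proof of this statement and simply defers to \cite[Theorem 2.9]{Desjardins1}, whose proof is exactly the decomposition you describe --- write $\mathbf{1}[\lambda(f(t))=\epsilon]=\tfrac{1}{2}(1+\epsilon\lambda(f(t)))$, get the main term from the Squarefree Conjecture applied to $fg$, and kill the Liouville-weighted sum by M\"obius expansion of the squarefree condition, Chowla on each residue class modulo $d^2N$ for $d\leq D$, and a tail bound for $d>D$, letting $X\to\infty$ before $D\to\infty$. Two points deserve to be made explicit rather than asserted: the tail estimate is not literally ``built into'' Conjecture~\ref{sfc1} as stated, but is recovered by comparing the truncated inclusion--exclusion at level $D$ with the full squarefree count that the conjecture provides; and your reduction of condition~(3) to a ray $\cA^{+}(X)$ can cut the count in half, so the constant $\tfrac{C_{fg,\cA}}{2}$ is only correct under the paper's (one-sided, and somewhat loosely stated) normalization $\A(X)\sim X/N$ --- a defect of the statement rather than of your argument, but one you should flag rather than absorb silently.
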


Combining Theorem \ref{conjecturesfacteurs} with Theorem \ref{theosqf} and Theorem \ref{theochowla}, we obtain:
\begin{coro}
Fix $\epsilon\in\{1,-1\}$. Let $f,g,h\in\Z[T]$ be squarefree polynomials. Assume that every factor of $g$ has degree at most 3 and that $\deg f=1$. For an arithmetic progression $\cA$, let $T(X)$ be the counting function as defined in Theorem \ref{conjecturesfacteurs}. Then the following estimate holds
\begin{equation}
T(X)=\frac{C_{fg,\cA}}{2}X+o(X).
\end{equation}
\end{coro}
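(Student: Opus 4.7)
The plan is to verify that, under the stronger degree restrictions of the corollary, the two analytic conjectures invoked as hypotheses in Theorem \ref{conjecturesfacteurs} become unconditional theorems, and then to apply Theorem \ref{conjecturesfacteurs} verbatim. In what follows I assume (without loss of generality, by factoring out contents and gathering common factors into $h$) that $f$, $g$, $h$ are primitive squarefree polynomials with no common irreducible factor, so that the hypotheses of Theorem \ref{conjecturesfacteurs} are literally in force once the two conjectural inputs have been checked.

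First I would verify the Squarefree conjecture for both $f$ and $g$. Since $g$ is squarefree and every irreducible factor of $g$ has degree at most $3$, Hooley's theorem (Theorem \ref{theosqf}) establishes Conjecture \ref{sfc1} for $g$ on every arithmetic progression $\cA$. The polynomial $f$ is squarefree of degree $1$, so its unique irreducible factor has degree at most $3$ as well, and Theorem \ref{theosqf} applies to $f$ in the same way.

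Next I would verify Chowla's conjecture for $f$. This is precisely the content of Theorem \ref{theochowla} (Hadamard--de la Vallée Poussin) applied to the linear polynomial $f$, so nothing further is needed.

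With the three analytic hypotheses of Theorem \ref{conjecturesfacteurs} now all established unconditionally, a direct application of that theorem to $f$, $g$, $h$ on the arithmetic progression $\cA$ immediately gives
$$T(X)=\frac{C_{fg,\cA}}{2}X+o(X),$$
which is the desired estimate. There is no genuine obstacle: the corollary is designed as a bookkeeping statement that records the unconditional consequence of Theorem \ref{conjecturesfacteurs} once the degree restrictions force Hooley's and Hadamard--de la Vallée Poussin's theorems to supply the analytic inputs, and all of the real analytic work has already been carried out in the proof of Theorem \ref{conjecturesfacteurs}.
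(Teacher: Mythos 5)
Your proposal is correct and matches the paper exactly: the paper gives no separate proof beyond the phrase ``Combining Theorem \ref{conjecturesfacteurs} with Theorem \ref{theosqf} and Theorem \ref{theochowla}, we obtain,'' which is precisely the argument you spell out (Hooley for the squarefree input on $f$ and $g$, Hadamard--de la Vall\'ee Poussin for Chowla on the linear $f$, then Theorem \ref{conjecturesfacteurs} verbatim). Your additional remark that one should reduce to primitive, pairwise coprime $f,g,h$ is a reasonable bit of bookkeeping the paper leaves implicit.
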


\subsubsection{A one variable version of the "Squarefree-Liouville" Sieve}

\begin{coro}\label{sflcrible}
Let $f(T),g(T)\in\Z[T]$ be polynomials. Assume that they are coprime, that no square of a nonunit in $\Z[T]$ divides $f(T)$ or $g(T)$, that every irreducible factor of $g$ has degree $\leq3$, and that $\deg f\leq1$.

Let $R(T)\in\Z[T]$ be a polynomial and $R= \gamma\cdot\prod_{0\leq i\leq r}{R_i}$ its decomposition in primitive factors. Suppose $sgn(\gamma)=+1$.
Fix
\begin{itemize}
\item a sequence $S=(p_1,\dots,p_s)$ of distinct prime numbers and
\item a sequence $T=(t_1,\dots,t_{s},t'_1,\dots,t'_{s})$ of nonnegative integers.
\end{itemize}
Let $N$ be an integer such that $p_1^{t_1+t'_1+1}\dots p_s^{t_s+t'_s+1}\mid N$ and that $p^2\mid N$ for all primes $p<(\deg f+\deg g)$. 

Suppose that there exists an integer $a$ such that 
\begin{enumerate}
\item\label{paszero} $f(a)g(a)\not\equiv0\mod p^2\text{, whenever }p\mid N \text{ and }p\not=p_i\text{ for any }i,$
\item\label{memesecteur} for every $1\leq i\leq r$, one has $R_i(t)> 0$,
\item\label{puissanceok}
$v_{p_i}(f(a))=t_i$ and $v_{p_i}(g(a))=t'_i\text{ for every }i=1,\dots,s$
\item and
$\lambda(f(a))=\epsilon.$
\end{enumerate}
Then there are infinitely many integers $t$ such that
\begin{enumerate}[1.]
\item $u\equiv a\mod N,$
\item for every $i$, one has $R_i(t)> 0$,
\item 
$\lambda(f(t))=\epsilon,$
\item and 
$f(t)=p_1^{t_1}\dots p_s^{t_s}\cdot l,$ and $g(t)=p_1^{t'_1}\dots p_s^{t'_s}l'$
where $l$ and $l'$ are squarefree and $v_{p_i}(l)=v_{p_i}(l')=0$ for all $i=1,\cdots,s$.
\end{enumerate}

\end{coro}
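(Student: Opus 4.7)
The plan is to deduce the corollary directly from Theorem \ref{conjecturesfacteurs} applied to the arithmetic progression $\cA := a + N\bZ$, with the pair $(f,g)$ and the ``sign-controlling'' polynomial taken to be the product of those irreducible factors $R_i$ of $R$ coprime to $fg$. The hypotheses of that theorem are met: Hooley's Theorem \ref{theosqf} supplies the Squarefree conjecture for $fg$ since every irreducible factor has degree at most $3$ (using $\deg f\leq 1$ and the assumption on $g$); the Hadamard--de la Vall\'ee Poussin Theorem \ref{theochowla} supplies Chowla's conjecture for $f$ since $\deg f\leq 1$; and the coprimality/squarefree assumptions on $(f,g)$ are already in the hypothesis of the corollary.

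Theorem \ref{conjecturesfacteurs} then yields
\[T(X) = \frac{C_{fg,\cA}}{2} X + o(X),\]
where $T(X)$ counts $t\in\cA(X)$ for which $f(t)g(t)/d_{fg,\cA}$ is $p^2$-free for every prime $p\nmid N$, $\lambda(f(t))=\epsilon$, and every $R_i(t)>0$. I would verify positivity of $C_{fg,\cA}$ directly from hypothesis (\ref{paszero}): each local factor $1-t_{fg}(p)/p^{2+\nu_p}$ is nonzero since not every residue class mod $p^{2+\nu_p}$ can kill $fg$, and for $p\mid N$ the class of $a$ provides a nonvanishing local contribution compatible with the required $p$-adic data. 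Hence $T(X)\to\infty$, and any such $t$ satisfies conclusions 1, 2, 3 of the corollary automatically.

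The remaining task is to check that such $t$ also satisfy the prescribed factorization of $f(t)$ and $g(t)$. From $p_i^{t_i+t'_i+1}\mid N$ and $t\equiv a\pmod N$, we obtain $f(t)\equiv f(a)\pmod{p_i^{t_i+1}}$; combined with $v_{p_i}(f(a))=t_i$ this forces $v_{p_i}(f(t))=t_i$ exactly, and symmetrically $v_{p_i}(g(t))=t'_i$. For primes $p\mid N$ with $p\notin\{p_1,\dots,p_s\}$, the divisibility $p^2\mid N$ (which holds for $p<\deg f+\deg g$) combined with hypothesis (\ref{paszero}) yields $f(t)g(t)\equiv f(a)g(a)\not\equiv 0\pmod{p^2}$, so $v_p(f(t)g(t))\leq 1$. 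For primes $p\nmid N$, the $p^2$-free condition from Theorem \ref{conjecturesfacteurs} gives the same bound. Putting these bounds together with coprimality of $f$ and $g$ as polynomials, one concludes that $f(t)/\prod_{i=1}^s p_i^{t_i}$ and $g(t)/\prod_{i=1}^s p_i^{t'_i}$ are squarefree integers coprime to each $p_i$, which is precisely the desired factorization.

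The main obstacle is already absorbed into the previously proved Theorem \ref{conjecturesfacteurs}, so the present statement is essentially a packaging of its consequences; the genuine care lies in the prime-by-prime bookkeeping above. The secondary delicate point is handling $R_i$'s that share a factor with $fg$: for such factors, the sign condition $R_i(t)>0$ is no longer independent of the factorization, but it is implied by the explicit form of $f(t)$ (respectively $g(t)$) together with the leading behavior along $\cA$, and one can always absorb any residual sign constraint into the choice of the congruence class $a$.
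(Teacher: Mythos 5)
The paper states Corollary \ref{sflcrible} without any proof, presenting it as an immediate consequence of Theorem \ref{conjecturesfacteurs} combined with Theorems \ref{theosqf} and \ref{theochowla}; your proposal carries out exactly that intended derivation (apply the counting theorem to $\cA=a+N\Z$, check $C_{fg,\cA}>0$, then transfer the local data at primes dividing $N$ through the congruence $t\equiv a\bmod N$), so in outline it matches the paper. Two points deserve tightening. First, your claim that a residual sign constraint $R_i(t)>0$, for an $R_i$ sharing a factor with $fg$, ``can be absorbed into the choice of the congruence class $a$'' is not correct as stated: the sign of a polynomial value is not determined by a congruence class. The workable argument is that the admissible $t$ have positive density in $\cA$, hence infinitely many lie on a ray $[X_0,+\infty)$ or $(-\infty,-X_0]$ on which each $R_i$ has eventually constant sign; one then still needs these eventual signs to be simultaneously positive, a point that the statement (and Theorem \ref{conjecturesfacteurs} itself) glosses over. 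Second, your prime-by-prime bookkeeping covers $p\in\{p_1,\dots,p_s\}$, $p\nmid N$, and $p\mid N$ with $p^2\mid N$, but not a prime $p\mid N$ with $p\notin\{p_1,\dots,p_s\}$, $p\geq\deg f+\deg g$ and $\nu_p(N)=1$: there the congruence only gives $f(t)g(t)\equiv f(a)g(a)\bmod p$, which does not exclude $p^2\mid f(t)g(t)$, while such $p$ is also outside the squarefree condition furnished by Theorem \ref{conjecturesfacteurs}. Both issues are inherited from imprecision in the statement rather than from a wrong method, but they should be flagged and repaired (e.g.\ by requiring $p^2\mid N$ for every $p\mid N$ and by making the ray argument explicit) rather than waved away.
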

\section{Proof of the theorem \ref{thmgeneral}}\label{sectionproof}

\begin{thm}\label{thmgeneral1}(=Theorem \ref{thmgeneral})
Let $\E$ be a family of elliptic curves over $\Q$. Suppose that there exists at least one polynomial $P_o(T)\in\Q[T]$ such that the reduction of $\E$ at $P_o(T)$ is either
\begin{enumerate}
\item\label{hyp1} multiplicative;
\item\label{hyp2} additive potentially multiplicative;
\item\label{hyp3} semi-stable and $\deg P_o$ is odd;
\item\label{hyp4} or additive potentially good and if the Kodaira symbol of the reduction is 
	\begin{enumerate}\item $II$, $II^*$, $IV$ or $IV^*$, then $$\mu_3\not\subseteq \Q[T]/P_o(T);$$
	\item $III$ or $III^*$, then $$\mu_3\not\subseteq \Q[T]/P_o(T).$$
	\end{enumerate}
\end{enumerate}

Suppose moreover that 
\begin{enumerate}[(i)]
\item $M_\E=\prod_{P\in\mathscr{M}}{P}$ respects Chowla's conjecture; 
\item $B_\E=\prod_{P\in\mathscr{B}}{P}$ respects the squarefree conjecture.
\end{enumerate}
Then the sets $W_+(\E,\Z)$ and $W_-(\E,\Z)$ are both infinite.
\end{thm}

The general strategy is to construct infinitely many pairs of integers $(t_1,t_2)$ such that $W(\E_{t_1})=-W(\E_{t_2})$. 

Recall that a place (or an irreducible polynomial associated to this place) is \emph{insipid} if it is of type \begin{itemize}
\item $I_0^*$
\item $II$, $II^*$, $IV$ or $IV^*$, and $$\mu_3\subseteq \Q[T]/P(T);$$
\item or $III$ or $III^*$, and $$\mu_4\subseteq \Q[T]/P(T).$$
\end{itemize} 

\begin{proof} Let $\frac{n_4}{d_4}$, $\frac{n_6}{d_6}$, $\frac{n_\Delta}{d_\Delta}$ be the respective coprime numerators and denominators of the contents of the polynomials $c_4(T)$, $c_6(T)$, $\Delta(T)$ associated to $\E$. (Observe that the only factors of $d_4,d_6,d_\Delta$ are $2$ or $3$.) Put as in Section \ref{sectionformuledusigne}
\[\delta=2\cdot3\cdot n_4\cdot n_6\cdot n_\delta\prod_{Q,Q'\in\mathscr{B}}{\mathrm{Res}(Q,Q'}).\]

Let $N=N_\E$ be the integer and $R_\E$ be the polynomial given by Proposition \ref{existencedunM} (choose the minimal such integer $N_\E$ and polynomial $R_\E$ of lowest degree). Write $R_\E=R=\gamma R_1\cdots R_r$, the decomposition in primitive polynomial. Let us choose $R$ such that $sign(\gamma)=+1$.
By definition,
the function $t\mapsto\prod_{p\mid\delta}{W_p(\E_{t})}\prod_{P\in\mathscr{B}}{g_{P}(t)}$ is constant when $t$ stays in a congruence class modulo $N$ and when $t$ is in a connected component of \[\R-\{t\in\R \ \vert R(t)=0\}.\]

For each $p\mid N$, put $\alpha_p=\nu_p(N)$. Moreover, write $N=2^{\alpha_2}3^{\alpha_3}p_1^{\alpha_{p_1}}\cdots p_s^{\alpha_{p_s}}$ the factorisation into distinct prime numbers.

Put \[S=(2,3,p_1,\dots,p_s),\] 
 and \[T=(0,\dots,0).\]

Let $\mathfrak{a}_2\mod 2^{\alpha_2}$ be a congruence class such that
for all $P_i$ of bad reduction (except the insipid ones):
\begin{equation*}\label{classe2}
P_i(\mathfrak{a}_2)\not\equiv0\mod2^{\alpha_2}.
\end{equation*}

Let $\mathfrak{a}_3\mod 3^{\alpha_3}$ be a congruence class such that for all $P_i$ of bad reduction (except the insipid ones):
 \begin{equation*}\label{classe3}
P_i(\mathfrak{a}_3)\not\equiv0\mod3^{\alpha_3}.
\end{equation*}

Let also, for each $p\mid N$ such that $p\not=2,3$, be classes $\mathfrak{a}_p\mod p^{\alpha_p}$ such that for all $P_i$ of bad reduction (except the insipid ones) we have:
\[P_i(\mathfrak{a}_p)\not\equiv0\mod p^{\alpha_p}.\]
As by assumption the polynomial $P_i$ have content 1, such classes $\mathfrak{a}_p$ exist for every $p\mid N$.

By the Chinese Remainder Theorem, there exist integers $a$ satisfying
\begin{equation}\label{eq:chinois1}
a\equiv\begin{cases}
\mathfrak{a}_2\mod 2^{\alpha_2}, & \\
\mathfrak{a}_3\mod 3^{\alpha_3}, & \\
\mathfrak{a}_p\mod p^{\alpha_p} & \text{for all $p\mid N$.} \\
\end{cases}
\end{equation}

Now we use the hypotheses: there exists an irreducible polynomial such that the reduction respects one of the properties from \ref{hyp1} to \ref{hyp4}. 

\begin{itemize}
\item Suppose that we can choose $P_o$ to have multiplicative reduction (property \ref{hyp1}).
Then 
\begin{enumerate}[i.]
\item choose $\mathfrak{a}_p$ such that $R_i(t)>0$ for all $1\leq i\leq r$.
\item put $f:=M_\E$.
\end{enumerate}
\item Suppose that there's no multiplicative place and that $P_o$ is not insipid (property \ref{hyp2} or \ref{hyp4}), then
\begin{enumerate}[i.]
\item choose $a$ (as in (\ref{eq:chinois1})) such that $R_i(t)>0$ for all $1\leq i\leq r$.
\item put $f:=1$.
\end{enumerate}
\item Suppose that $P$ of reduction type $I_0^*$ with $\deg P$ odd (property \ref{hyp3}),
then
\begin{enumerate}[i.]
\item choose $a$ (as in (\ref{eq:chinois1})) such that $R_i(t)>0$ for all $1\leq i\leq r$ except for $R_i=P_o$ where $R_{i_0}<0$.
\item put $f:=1$.
\end{enumerate}
\end{itemize}

In any case, put $g=\prod{P}$, where $P$ runs through the non-insipid polynomials.

If $f\not=1$ (property \ref{hyp1}: there exists a finite place of multiplicative reduction), use the Liouville-Squarefree Sieve given by Corollary \ref{sflcrible} on $f,g,S,T,N$ and $a$ as previously, to prove that there exists a set $\mathscr{F}_1$ of infinitely many $t\in\Z$ such that 
\[g(t)=l\text{, }\quad \text{ and }\lambda(f(t))=+1\]
where $l$ is a squarefree integer coprime to each $p\in S$ by our choice of $S$ and $T$.
This set $\mathscr{F}_1$ may be chosen such that  $\forall t\in\mathscr{F}_1$, $R_i(t)>0$ for all $1\leq i\leq r$.

If $f=1$, use similarly the Squarefree Sieve given by Corollary \ref{sfcrible} on $g,S,T,N$ and $a$, to obtain the set $\mathscr{F}_1$. As previously, the set $\mathscr{F}_1$ may be chosen such that  $\forall t\in\mathscr{F}_1$, $R_i(t)>0$ for all $1\leq i\leq r$.

By Proposition \ref{variation3} part B, we have that for all $t,t'\in\mathscr{F}_1$,
\[W(\E_{t})=W(\E_{t'}).\]
 
Suppose that the $P_o$ that we fixed has type $I_0^*$ and has odd degree (property \ref{hyp3}). Then we similarly define $\mathscr{F}_2$ such that for all $P\not=P_o$ one has $R_P(t)>0$, and $R_{P_0}(t)<0$. By Proposition \ref{variation3}, all the elements of $\mathscr{F}_2$ are such that their fibers on $\E$ have the same root number. Then by Proposition \ref{variationimpaire} every pair $(t_1,t_2)$ with $t_1\in\mathscr{F}_1$, and $t_2\in\mathscr{F}_2$, is such that \[W\left(\E_{t_1}\right)=-W\left(\E_{t_2}\right).\]

For the other cases, according to the type of $P_o$, choose a prime number $q_0$ with one of the following property:
\begin{enumerate}
\item Suppose that $P_o$ has type $I_m^*$ or $I_m$ (property \ref{hyp1} or \ref{hyp2}).
Put $P(T)=-c_6(T)/Q(T)^3$.
By Lemma \ref{variationpotmult} applied to $P(T)$, $Q(T)$ and $S=:\mathscr{P}_0$, there exist $q_0\not\in S$ and $m_0\leq0$ an integer such that $q_0^2\mid Q(m_0)$ and that $-q_0^{-2}P(m_0)Q(m_0)$ is a square modulo $q_0$. 
\item Suppose that $P_o$ has type $II$, $II^*$, $IV$, $IV^*$ (property \ref{hyp4} (a)), then let $q_o$ be such that $\left(\frac{-3}{q_0}\right)=-1$. This prime exists whenever $P_o$ is not insipid by Remark \ref{p0II}
\item Suppose that $P_o$ has type $III$, $III^*$ (property \ref{hyp4} (b)), then let $q_0$ be such that $\left(\frac{-1}{q_0}\right)=-1$. This prime exists whenever $P_o$ is not insipid by Remark \ref{p0III}.
\end{enumerate}

Consider the sequences
\[S'=(2,3,p_1,\dots, p_s,q_0)\]
and
\[T'=(0,\dots,0,2).\]

By the Chinese Remainder Theorem, there exists an integer $a'$ satisfying both (\ref{eq:chinois1}) and 
\[a'\equiv m_0\mod q_0^3\]
We can choose it such that $R_i(t)>0$ for all $1\leq i\leq r$.

By using either Corollary \ref{sflcrible} or Corollary \ref{sfcrible} (according to whether or not $f=1$) 
on \[\text{($f$,) $P_o$, $S'$, $T'$ and $a'$}\] we obtain $\mathscr{F}_2$, a set of infinitely many $t\in\Z$ such that \[P_o(t)=q_0^2l_{t},\quad(\text{ and }\lambda(f(t))=+1,)\]
where $l$ is a squarefree integer coprime to every element of $S'$ and where $q_0^{-6}c_6(t)$ is a square modulo $q_0$. As previously, we choose $\mathscr{F}_2$ such that $\forall t\in\mathscr{F}_2$ one has $R_i(t)>0$ for all $1\leq i\leq r$ -- except in case $P_o$ is insipid: for all $i\not=i_0$, with the exception that $R_{i_o}(t)<0$. By Proposition \ref{variation3}, all the elements of $\mathscr{F}_2$ are such that their fibers on $\E$ have the same root number.

To end the proof, we use Proposition \ref{variation3} part A to show that for all $t_1\in\mathscr{F}_1$, and every $t_2\in\mathscr{F}_2$, one has \[W\left(\E_{t_1}\right)=-W\left(\E_{t_2}\right).\]
\end{proof}

\begin{rem}{An alternative proof for the case where $\E$ has multiplicative reduction can be found in Manduchi's article where she proves the following:}

\begin{thm}\label{thmmult}\cite[Theorem 2]{Manduchi}
Let $\E$ be an non-isotrivial family of elliptic curves satisfying the hypotheses of Corollary \ref{corogeneral} with a linear place of multiplicative reduction on $\E$ outside of $-\deg$. 
Then the sets $W_\pm(\E,\Z)$ are both infinite. 
\end{thm}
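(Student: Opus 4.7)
The strategy is to isolate the contribution of the linear multiplicative place $P_o$ to the root number and to exploit the fact that for a linear polynomial, Chowla's conjecture is unconditional (Theorem \ref{theochowla}). Under the hypotheses of Corollary \ref{corogeneral} we have $\deg M_\E \leq 1$, and since $P_o \mid M_\E$ is linear, in fact $M_\E = P_o$. The proof then follows the blueprint of Theorem \ref{thmgeneral1}, but with the variation of $W(\E_t)$ driven entirely by $\lambda(P_o(t))$ rather than by the auxiliary prime $q_0$ used in the potentially good reduction case.

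\textbf{Step 1: Fixing the non-multiplicative contribution.} By Theorem \ref{formuledusigne}, write
$$W(\E_t) = \lambda(P_o(t)) \cdot \Psi(t), \qquad \Psi(t) = \prod_{p \mid \delta} W_p(\E_t) \cdot \prod_{P \mid \Delta_\E} h_P(t)\, g_P(t).$$
By Proposition \ref{existencedunM}, the subfactor $\prod_{p\mid\delta} W_p(\E_t) \cdot \prod_P g_P(t)$ depends only on $t \bmod N_\E$ and on the signs of the irreducible factors $R_i$ of $R_\E$ at $t$. The factors $h_P(t)$ depend on the squared part of $P(t)$; to freeze them I apply the squarefree sieve (Corollary \ref{sfcrible}) to $B_\E = \prod P$, whose irreducible factors all have degree $\leq 3$, so Hooley's theorem applies. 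This produces an arithmetic progression $a \bmod N$ (with $N_\E \mid N$) and an infinite set of $t$ in it for which $B_\E(t)$ is squarefree away from $\delta$ and every sign $R_i(t)$ is prescribed. For such $t$, part B of Lemma \ref{constancehpotmult} gives $h_P(t) = h_P(a)$ for each $P$ (noting that the multiplicative factor $h_{P_o}$ collapses to $1$ once $P_o(t)$ is squarefree away from $\delta$), so $\Psi(t) = \sigma$ is a constant sign in $\{\pm 1\}$ on this set.

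\textbf{Step 2: Varying $\lambda(P_o(t))$.} Next I apply the combined Squarefree--Liouville estimate of Theorem \ref{conjecturesfacteurs} with $f = P_o$ and $g = B_\E / P_o$, which are coprime squarefree primitive polynomials. Chowla's conjecture holds for $f$ by linearity (Theorem \ref{theochowla}), and the Squarefree conjecture holds for $fg = B_\E$ by Hooley's theorem. For each $\epsilon \in \{\pm 1\}$, the theorem yields $\sim (C_{fg,\cA}/2)\, X$ integers $t \leq X$ in the progression $a \bmod N$ simultaneously satisfying the squarefree condition on $fg$, the sign conditions $R_i(t) > 0$, and $\lambda(P_o(t)) = \epsilon$. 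The corresponding fibers have $W(\E_t) = \epsilon\,\sigma$, and both resulting sets are infinite, so $W_\pm(\E, \Z)$ are infinite.

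\textbf{Main obstacle.} The crux is Theorem \ref{conjecturesfacteurs}: one must verify that the squarefree constraint which freezes $\Psi(t)$ is compatible, with positive density, with a prescribed value of $\lambda(P_o(t))$. This is the heuristic independence between \emph{$fg$ taking almost-squarefree values} and \emph{$f$ having a given parity of prime factors} — a statement that requires combining the Hooley sieve with Chowla's estimate for a linear polynomial. Once this independence is granted, everything else is routine bookkeeping via Theorem \ref{formuledusigne}, Proposition \ref{existencedunM} and Lemma \ref{constancehpotmult}.
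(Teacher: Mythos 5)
Your proof is correct, and it implements exactly the strategy that the paper attributes to Manduchi for this statement (``she focuses rather on making the Liouville function vary''); the paper itself does not reprove Theorem \ref{thmmult} but quotes it as an alternative to its own mechanism. The comparison is worth making precise. In the proof of Theorem \ref{thmgeneral1}, the Liouville factor is \emph{frozen}: the Squarefree--Liouville sieve (Corollary \ref{sflcrible}) imposes $\lambda(M_\E(t))=+1$ on both families $\mathscr{F}_1$ and $\mathscr{F}_2$, and the sign of $W(\E_t)$ is flipped by planting an auxiliary prime square $q_0^2$ in the value $Q_0(t)$ of a chosen non-insipid polynomial, which changes the corrective factor $h_{Q_0}$ (Lemma \ref{constancehpotmult}, Proposition \ref{variation3}). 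You instead freeze the entire second factor $\Psi(t)$ --- periodicity mod $N_\E$ and sign conditions for $\prod_{p\mid\delta}W_p\prod_P g_P$ via Proposition \ref{existencedunM}, plus squarefreeness of $B_\E(t)$ away from $N$ so that every $h_P(t)=+1$ --- and let $\lambda(P_o(t))$ do the switching, which is legitimate precisely because $P_o$ is linear, so Chowla is unconditional (Theorem \ref{theochowla}) and the independence statement of Theorem \ref{conjecturesfacteurs} applies with $f=P_o$, $g=B_\E/P_o$. Your route avoids the auxiliary prime $q_0$ and all case analysis on Kodaira types, but it only works when there is a multiplicative place for which Chowla is available, whereas the $q_0$-mechanism also covers surfaces with no multiplicative reduction. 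Two points to tighten: (i) you should state that the progression $a\bmod N$ is chosen by the Chinese Remainder Theorem so that $B_\E(a)\not\equiv 0\bmod p^2$ for the relevant $p\mid N$, which guarantees $C_{fg,\cA}>0$ and hence that both sets produced in Step 2 are genuinely infinite; (ii) Step 1 is logically subsumed by Step 2, since Theorem \ref{conjecturesfacteurs} already imposes the squarefree condition on $fg=B_\E$ together with the sign conditions on the $R_i$.
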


In her proof, she focuses rather on making the Liouville function vary.
\end{rem}

\section{Families with coefficients of bounded degrees}\label{sectionexample}

Let $\E$ be a family of elliptic curves given for $t\in\Q$ by the Weierstrass equation 
$$\E_t:y^2=x^3+a_2(t)x^2+a_4(t)x+a_6(t)$$
where $\deg a_i\leq 2$ for $i=2,4,6$.
Suppose that there is no place of multiplicative reduction other than possibly $-\deg$ (or in other words \textit{potentially parity-biaised}), then Bettin, David and Delauney prove in recent work \cite[Theorem 7 and 8]{BDD} that there are essentially 6 different classes of non-isotrivial such famillies, namely:
\begin{align}
\mathcal{F}_s(t):& y^2=x^3+3tx^2+3sx+st\text{, with }s\in\Z_{\not=0};\\
\mathcal{G}_w(t):& wy^2=x^3+3tx^2+3tx+t^2\text{, with }w\in\Z_{\not=0}; \\
\mathcal{H}_w(t):&wy^2=x^3+(8t^2-7t+3)x^2-3(2t-1)x+(t+1)\text{, with }w\in\Z_{\not=0};\\
\mathcal{I}_w(t):&wy^2=x^3+t(t-7)x^2-6t(t-6)x+2t(5t-27)\text{, with }w\in\Z_{\not=0};\\
\mathcal{J}_{m,w}(t):&wy^2=x^3+3t^2x^2-3mtx+m^2\text{, with }m,w\in\Z_{\not=0};\\
\mathcal{L}_{w,s,v}(t):&wy^2=x^3+3(t^2+v)x^2+3sx+s(t^2+v)\text{, with }s\in\Z_{\not=0}.
\end{align}

Observe that those families have no place of multiplicative reduction except possibly at $-deg$. The bad places of those surfaces are:
\begin{description}
\item[For $\mathcal{F}_s$] $t^2-s$ ($II$), $-deg$ ($I_8^*$);
\item[For $\mathcal{G}_w$] $t-1$ ($II$), $t$ ($III$), $-deg$ ($I_7^*$);
\item[For $\mathcal{H}_w$] $t$ ($III$), $t^2-11/8t+1$ ($II$), $-deg$ ($I_5$);
\item[For $\mathcal{I}_w$] $t$ ($II$), $t^2-10t+27$ ($III$), $-deg$ ($I_4$);
\item[For $\mathcal{J}_{m,w}$] $t^3+m$ ($II$), $-deg$ ($I_6$);
\item[For $\mathcal{L}_{w,s,v}$] $t^4+2vt^2+v^2-s$ ($II$), $-deg$ ($I_4$).
\end{description}

As a corollary of Theorem \ref{thmgeneral}, any family isomorphic to one of the form $\mathcal{G}_w$, $\mathcal{H}_w$, $\mathcal{I}_w$ and $\mathcal{J}_{m,w}$ have infinitely many integer fibers with negative (resp. positive) root number. Hence, the only surfaces among those six families to which our Theorem \ref{thmgeneral} do not apply are the only of the form $\mathcal{F}_s$ with $s=-3s^2$ and $\mathcal{L}$ in very special circonstances.

In \cite{Chinis}, Chinis computes the average root number on the families $\mathcal{F}_s$, and shows that $\mathcal{F}_s$ is \emph{parity biased} over $\Z$ (i.e. the average root number over $\Z$ is not 0) if and only if $s\not\equiv1,3,5\mod 8$.
The \emph{average root number} of a family of elliptic curve $\E$ over $\Z$ is defined as $$Av_\Z(W(\E)):=\lim_{T\rightarrow\infty}{\frac{1}{2T}\sum_{\vert t \vert\leq T}{W(\E_t)}},$$ provided that the limit exists. However, even knowing that $\mid Av_\Z(W(\E))\mid=1$ would not be enough to determine whether or not the root number is constant. A work in progress by the author and Rena Chu aims at finding conditions on $s,a,b\in\Z$ for $\mathcal{F}_s(at+b)$ to have constant root number \cite{Rena}.

\bibliographystyle{alpha}
\bibliography{biblio}

\end{document}